\title{Admissible replacements for simplicial monoidal model categories}
\author{Haldun  \"Ozg\"ur Bayindir}
\author{Boris Chorny} 
\newcommand{\op}{{\ensuremath{\textup{op}}}}
\newcommand {\cofib} {\ensuremath{\hookrightarrow}}
\newcommand {\trivcofib} {\ensuremath{\tilde\hookrightarrow}}
\newcommand {\otday} {\ensuremath{\otimes_{\text{Day}}}}
\newcommand {\trivfibr} {\ensuremath{\tilde\twoheadrightarrow}}
\newcommand {\we} {\ensuremath{\tilde\rightarrow}}
\DeclareMathOperator{\ind}{\textup{ind}}
\newcommand{\cal}[1]{\ensuremath{\mathcal #1}}
\newtheorem {theorem1}{Theorem}[subsection]
\newtheorem {theorem}[theorem1]{Theorem}
\newtheorem {corollary}[theorem1]{Corollary}
\newtheorem {proposition}[theorem1]{Proposition}
\newtheorem {lemma}[theorem1]{Lemma}
\theoremstyle{definition}
\newtheorem {definition}[theorem1]{Definition}
\newtheorem {notation}[theorem1]{Notation}
\newtheorem {construction}[theorem1]{Construction}
\theoremstyle{remark}
\newtheorem {remark}[theorem1]{Remark}
\newcommand{\cat}[1]{\ensuremath{\EuScript #1}}
\newcommand{\sS}{\ensuremath{\mathcal{S}} }
\DeclareMathOperator{\Hom}{\ensuremath{\textup{Hom}}}
\DeclareMathOperator{\map}{\textup{map}}
\newcommand{\II}{\ensuremath{\mathbb{I}}}
\newcommand{\colim}{\ensuremath{\mathop{\textup{colim}}}}
\newcommand{\hocolim}{\ensuremath{\mathop{\textup{hocolim}}}}
\newcommand{\I}{\mathbb{I}}
\newcommand{\Id}{\ensuremath{\textup{Id}}}
\newcommand{\Rosicky}{Rosick\'y }
\newcommand{\co}{\colon\thinspace}
\renewcommand{\hom}{\ensuremath{{\rm hom}}}
\newcounter{zahl}%
    {\end{list}}%
\begin{document}
\SelectTips{cm}{10}

\begin{abstract}
Using Dugger's construction of universal model categories, we produce replacements for simplicial and combinatorial symmetric monoidal model categories with better operadic properties. Namely, these replacements admit a model structure on  algebras over any given colored operad.

As an application, we show that in the stable case, such symmetric monoidal model categories are classified by commutative ring spectra when the monoidal unit is a compact generator. In other words, they are strong monoidally Quillen equivalent to modules over a uniquely determined commutative ring spectrum.


\end{abstract}
\maketitle

\section{Introduction}

A symmetric monoidal model category is a setting to study structured objects such as monoids, commutative monoids or modules using homotopy theoretic tools. However, one does not always have a model structure on these structured objects. For example, commutative monoids in chain complexes (commutative DGAs) is not known to carry a model structure induced from the underlying model category of chain complexes. The general problem of transferring a model structure to the categories of monoids and modules is studied by Schwede and Shipley in \cite{schwede2000algebrasandmodules} and it is shown that these lifts exist under mild hypothesis. In \cite[Section 4.5.4]{lurie2012higher}, Lurie studies this transfer problem for commutative monoids and this lifting problem requires a stronger hypothesis, which may be verified, though, for symmetric spectra with the positive stable model structure. More generally, one considers the algebras over a colored symmetric operad in a given symmetric monoidal model category. The transfer problem in this generality was studied  by Pavlov and Scholbach, \cite{Pavlov2018admissibility}.  

The main result of this paper is that any combinatorial simplicial  symmetric monoidal model category may be replaced, up to strong symmetric monoidal Quillen equivalence, by a model category allowing for algebras over any symmetric operad to have a model structure  transferred from the underlying category. As an application, we show that, under mild conditions, any stable symmetric monoidal model category is strong symmetric monoidally Quillen equivalent to a category of modules over \emph{commutative} ring spectrum. This result generalizes the Theorem by Schwede and Shipley, \cite[3.1.1]{schwede2003stablearemodules}.  In the framework of stable $\infty$-categories a similar result was obtained by Lurie, \cite[Proposition 7.1.2.7]{lurie2012higher}. Our approach provides a refinement of Lurie's theorem to the realm of stable combinatorial model categories.

Pavlov and Scholbach, \cite{pavlov2019symmetricopSsp}, call a symmetric monoidal model category \textbf{admissible} if for every colored symmetric operad $\cat O$, the category of  $\cat O$-algebras carry a model structure transferred from the underlying category. 
According to Pavlov and Scholbach, a monoidal model category is \textbf{nice} if it is h-monoidal, pretty small, flat and tractable. Detailed definitions may be found in Section \ref{sec Lpresheaf is nice}. 

 Pavlov and Scholbach's main result in \cite{pavlov2019symmetricopSsp} is that for a nice symmetric monoidal model category $\cat M$, there exists an admissible replacement of $\cat M$, up to a strong symmetric monoidal Quillen equivalence. This replacement is  the category of symmetric spectra over $\cat M$.


In this paper, we show that any  simplicial combinatorial symmetric monoidal model category is strong symmetric monoidally Quillen equivalent to a nice symmetric monoidal model category. Combining with the result of Pavlov and Scholbach, we obtain that  such model categories have replacements with admissible symmetric monoidal model categories. 

These replacements have many applications. For example, our admissible replacement is a Goerss-Hopkins context. This is again due to the results of Pavlov and Scholbach   \cite[Theorem 1.6]{pavlov2019symmetricopSsp}. This means that Goerss-Hopkins obstruction theory can be applied in this replacement to obtain commutative monoids from commutative monoids in the homotopy category. Furthermore, one has a strictification for $E_\infty$-algebras. In other words, $E_\infty$-algebras are Quillen equivalent to commutative monoids in this setting.

\begin{notation}
All our monoidal categories are symmetric monoidal. Therefore we say monoidal, when we mean symmetric monoidal. Similarly for model categories, functors and Quillen pairs.
\end{notation}
\begin{notation}
There are two notions of monoidal Quillen equivalences defined by Schwede and Shipley \cite{Schwede2003monoidalquillen}, weak monoidal and strong monoidal. All the monoidal Quillen equivalences we talk about are strong monoidal Quillen equivalences.
\end{notation}

\begin{theorem}\label{thm admisslbe rplcmnt}
Every combinatorial and simplicial symmetric monoidal model category is symmetric monoidally Quillen equivalent to an admissible  symmetric monoidal model category. Analogous result holds for monoidal model categories that are not necessarily symmetric.
\end{theorem}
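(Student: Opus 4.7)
The plan is to reduce the statement to Pavlov and Scholbach's theorem, which constructs an admissible replacement (via symmetric spectra) for any \emph{nice} symmetric monoidal model category. So the content of Theorem \ref{thm admisslbe rplcmnt} is to produce, from an arbitrary combinatorial simplicial symmetric monoidal model category $\cat{M}$, a strong symmetric monoidal Quillen equivalence from $\cat{M}$ to a nice symmetric monoidal model category $\cat{N}$; composing with Pavlov--Scholbach's symmetric spectrum replacement for $\cat{N}$ then yields the desired admissible replacement.

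The first step is to apply Dugger's construction: for any combinatorial model category $\cat{M}$, there exists a small category $\cat{C}$ together with a set $S$ of maps in the projective simplicial presheaf category $\sPre{C}$, such that $\cat{M}$ is Quillen equivalent to the left Bousfield localization $L_S \sPre{C}$. When $\cat{M}$ is simplicial and symmetric monoidal, we should arrange that $\cat{C}$ itself inherits a symmetric monoidal enrichment (for instance, by taking $\cat{C}$ to be a small full symmetric monoidal subcategory of cofibrant objects of $\cat{M}$ closed under tensor product, or by freely adjoining the monoidal structure), so that Day convolution equips $\sPre{C}$ with a symmetric monoidal structure for which the Quillen equivalence to $\cat{M}$ is strong symmetric monoidal. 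The localizing set $S$ must be enlarged to be closed under Day convolution with representables so that $L_S \sPre{C}$ remains symmetric monoidal.

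The second, and main technical, step is to verify that the localized category $L_S \sPre{C}$ with Day convolution is nice in the sense of Pavlov--Scholbach: tractable, pretty small, flat, and h-monoidal. Tractability and pretty smallness come essentially for free from the projective structure on $\sPre{C}$ (generating cofibrations have cofibrant, in fact representable, domains, and $\sPre{C}$ is $\lambda$-presentable for suitable $\lambda$), and these properties are preserved by the left Bousfield localization at a set. The delicate conditions are flatness and h-monoidality after localization: one must show that tensoring with any object preserves (projective trivial / h-) cofibrations and weak equivalences to the required extent, in spite of the localization. This is where the simplicial hypothesis will be used, via the compatibility of Day convolution with the simplicial enrichment on representables, and it is the step I expect to be the main obstacle; it is presumably carried out in the section referenced as \ref{sec Lpresheaf is nice}.

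Once $L_S\sPre{C}$ is shown to be nice, the third step is purely formal: Pavlov and Scholbach's main theorem produces a strong symmetric monoidal Quillen equivalence from $L_S\sPre{C}$ to the symmetric spectrum object category $\Sp^\Sigma(L_S\sPre{C})$, and the latter is admissible. Composing the Dugger Quillen equivalence $\cat{M} \simeq_Q L_S\sPre{C}$ with this Pavlov--Scholbach equivalence yields the required admissible replacement of $\cat{M}$. Finally, for the non-symmetric case, one replaces Day convolution over a symmetric monoidal $\cat{C}$ by Day convolution over a merely monoidal $\cat{C}$ and invokes the monoidal (non-symmetric) variant of the Pavlov--Scholbach construction in place of symmetric spectra; all niceness arguments are formally the same.
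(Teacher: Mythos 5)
Your overall strategy — Dugger's universal-model-category presentation with Day convolution on a small monoidal subcategory of cofibrant objects, verification of niceness in the Pavlov--Scholbach sense, then passage to symmetric spectra — is exactly the route the paper follows, and your assessment of where the technical weight lies (flatness and $h$-monoidality surviving the localization) is accurate. However, there are two genuine gaps.

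First, you never address the case where the monoidal unit of $\cat M$ is not cofibrant. This is not a cosmetic issue: if $\I_{\cat M}$ is not cofibrant, then there is no ``small full symmetric monoidal subcategory of cofibrant objects of $\cat M$ closed under tensor product,'' since such a subcategory would have to contain the unit. The paper handles this by a preliminary reduction: Muro's theorem (cited as Theorem~\ref{thm Muro cofibrant unit}) replaces $\cat M$ by a monoidally Quillen equivalent model structure on the same underlying category whose unit is cofibrant, and one checks that this replacement is still combinatorial and simplicial. Your alternative suggestion of ``freely adjoining the monoidal structure'' would change the presheaf category and break the identification with Dugger's universal model category, so it does not obviously salvage the argument. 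Second, your proposal to \emph{enlarge} the localizing set $S$ ``to be closed under Day convolution with representables'' is dangerous: enlarging $S$ adds new local equivalences, and it is not automatic that the Quillen pair to $\cat M$ remains a Quillen \emph{equivalence} after the enlargement. The paper instead proves that Dugger's original $S$ already yields a monoidal left Bousfield localization. The key point is that $D$ is a strong monoidal left Quillen functor which, after localization, detects weak equivalences between cofibrant objects; since $S$ consists of cofibrations between cofibrant objects and $D(f \otday A) \cong D(f)\wedge D(A)$ is then a weak equivalence between cofibrants in $\cat M$, the map $f \otday A$ is $S$-local for free. Once one knows $L_S$ is a monoidal localization, preservation of flatness and $h$-monoidality follows from general results of Pavlov--Scholbach, so the ``main obstacle'' you flag is resolved by citation rather than by hand. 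Finally, a minor point: the paper proves the result more generally for $\cat V$-enriched monoidal model categories with $\cat V$ satisfying the axioms of Notation~\ref{notation axioms for V}, of which simplicial sets are one instance, and it passes to $E$-modules (equivalently, $I$-space objects) rather than suspension spectra, with the admissible structure being the positive stable one, Quillen equivalent to the stable one via the identity.
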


\begin{remark}
In \cite{shipley2004convenient}, Shipley constructs a model structure for commutative ring spectra where cofibrant commutative ring spectra forget to  spectra that are cofibrant in a non-positive model structure. The admissible replacement provided by the proof the theorem above also satisfies a similar property. In this admissible replacement, cofibrant commutative monoids forget to cofibrant objects in a non-positive model structure. Indeed, this is true for algebras over various operads. This is a consequence of Pavlov-Scholbach \cite[Proposition 2.3.10 and Theorem 4.4]{pavlov2019symmetricopSsp}.
\end{remark}

The admissible replacement we construct satisfies further properties due to Pavlov-Scholbach \cite[Theorems 4.6 and 4.9]{pavlov2019symmetricopSsp}. These are stated in the following theorems.

\begin{theorem}\label{thm weak equivalences of operads}
Every combinatorial and simplicial symmetric monoidal model category is symmetric monoidally Quillen equivalent to an admissible symmetric monoidal model category $\cat C$ where a weak equivalence of operads $O \to P$ in \cat C induces a Quillen equivalence between the model categories of $O$-algebras and $P$-algebras.
\end{theorem}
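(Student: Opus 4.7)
The plan is to observe that the admissible replacement $\cat C$ constructed in the proof of Theorem~\ref{thm admisslbe rplcmnt} automatically enjoys the stronger property claimed here, so no new construction is required. Given a combinatorial simplicial symmetric monoidal model category $\cat M$, the proof of Theorem~\ref{thm admisslbe rplcmnt} produces, via Dugger's universal model categories machinery, a \emph{nice} symmetric monoidal model category $\cat N$ that is strong symmetric monoidally Quillen equivalent to $\cat M$, and then takes $\cat C = \Sp^\Sigma(\cat N)$, the category of symmetric spectra over $\cat N$, as the admissible replacement.

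Once this $\cat C$ is in hand, the conclusion is supplied directly by \cite[Theorem 4.9]{pavlov2019symmetricopSsp}, which asserts that for the symmetric spectra over any nice monoidal model category, every weak equivalence of colored symmetric operads $O \to P$ induces a Quillen equivalence between the model categories of $O$-algebras and $P$-algebras. Since our $\cat C$ is precisely of this form, the desired conclusion follows immediately once niceness of $\cat N$ is established.

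The only substantive step is therefore the verification that the replacement $\cat N$ built for Theorem~\ref{thm admisslbe rplcmnt} is in fact nice in the sense of Pavlov-Scholbach, i.e.\ h-monoidal, pretty small, flat, and tractable. This verification is carried out in Section~\ref{sec Lpresheaf is nice} and constitutes the main technical obstacle of the paper; but it is exactly the same obstacle one already confronts in proving Theorem~\ref{thm admisslbe rplcmnt} itself. The present theorem adds no new obstruction beyond invoking the stronger conclusion of \cite[Theorem 4.9]{pavlov2019symmetricopSsp} in place of the admissibility output used for Theorem~\ref{thm admisslbe rplcmnt}.
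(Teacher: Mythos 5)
Your proposal is correct and follows the same route as the paper: reuse the admissible replacement $Sp(L_S[\cat M_{\lambda,cof}^{\op},\sS],E)$ (with the positive stable model structure) constructed for Theorem~\ref{thm admisslbe rplcmnt}, whose niceness is established in Section~\ref{sec Lpresheaf is nice}, and then invoke the relevant Pavlov--Scholbach result on rectification of operads for symmetric spectra over a nice base. One small correction: the result you want is \cite[Theorem 4.6]{pavlov2019symmetricopSsp}, not Theorem 4.9; in this paper Theorem 4.9 is the input for the $\infty$-categorical statement (Theorem~\ref{thm infty category of algebras}), while Theorem 4.6 is the one asserting that a weak equivalence of operads induces a Quillen equivalence of algebra categories.
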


\begin{remark}
We prove Theorems \ref{thm admisslbe rplcmnt} and \ref{thm weak equivalences of operads} in a more general setting, see Theorem \ref{thm generalized admssible and w e of operads}. Namely, these theorems are true for combinatorial \cat V symmetric monoidal model categories for every symmetric monoidal model category \cat V satisfying the axioms listed in Notation \ref{notation axioms for V}. For instance, the model categories of pointed simplicial sets and chain complexes over a field satisfy these axioms. 
\end{remark}

In the theorem below, \cat O denotes an operad in simplicial sets and  $\text{N}^{\otimes} \cat O$ denotes the operadic nerve of $\cat O$.
\begin{theorem}\label{thm infty category of algebras}
Every combinatorial and simplicial symmetric monoidal model category is symmetric monoidally Quillen equivalent to an admissible symmetric monoidal category \cat C where the underlying $\infty$-category of the model category of $\cat O$-algebras in \cat C is equivalent to the $\infty$-category of $\text{N}^{\otimes} \cat O$-algebras on the underlying $\infty$-category of \cat C.
\end{theorem}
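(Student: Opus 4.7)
The plan is to reduce the statement to Theorem~\ref{thm admisslbe rplcmnt} together with a rectification theorem of Pavlov--Scholbach. Given a combinatorial simplicial symmetric monoidal model category $\cat M$, I would take $\cat C$ to be exactly the admissible replacement constructed in the proof of Theorem~\ref{thm admisslbe rplcmnt}, so nothing new has to be produced at the level of model categories; the only content is that the extra property about $\cat O$-algebras transports along that replacement.

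The first step is therefore to recall the explicit form of $\cat C$ coming out of Theorem~\ref{thm admisslbe rplcmnt}. The proof of that theorem proceeds by building a nice (h-monoidal, pretty small, flat, tractable) combinatorial simplicial symmetric monoidal model category that is strong symmetric monoidally Quillen equivalent to $\cat M$, namely a suitable category of symmetric spectra over a universal presheaf model category of $\cat M$, and then invoking Pavlov--Scholbach's main theorem from \cite{pavlov2019symmetricopSsp} to upgrade it to an admissible one. So $\cat C$ is not merely admissible: it is nice, tractable, combinatorial, simplicial, and in particular a Goerss--Hopkins context in the sense of \cite[Theorem~1.6]{pavlov2019symmetricopSsp}.

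The second step is to apply the Pavlov--Scholbach comparison theorem \cite[Theorem~4.9]{pavlov2019symmetricopSsp} to the model category $\cat C$. That theorem says precisely that, for a symmetric monoidal model category satisfying the package of properties listed above, rectification holds for algebras over operads in simplicial sets: for every simplicial operad $\cat O$, the underlying $\infty$-category of the transferred model structure on $\cat O$-algebras in $\cat C$ is equivalent to the $\infty$-category of $\text{N}^{\otimes}\cat O$-algebras in the underlying $\infty$-category of $\cat C$. Since $\cat C$ is admissible by Theorem~\ref{thm admisslbe rplcmnt}, the model structure on $\cat O$-algebras exists, so the statement of \cite[Theorem~4.9]{pavlov2019symmetricopSsp} applies verbatim and yields the desired equivalence of $\infty$-categories.

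The only real obstacle is bookkeeping: one has to check that the admissible replacement $\cat C$ produced by the proof of Theorem~\ref{thm admisslbe rplcmnt} genuinely satisfies the full list of hypotheses that \cite[Theorem~4.9]{pavlov2019symmetricopSsp} requires, rather than only admissibility. This is where the construction as a suitable spectrification of a universal model category pays off, because Pavlov--Scholbach verify these hypotheses directly for that construction; beyond this verification, which is already done once in the proof of Theorem~\ref{thm admisslbe rplcmnt}, no additional argument is required.
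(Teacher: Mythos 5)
Your proposal is correct and follows essentially the same route as the paper: take $\cat C$ to be the admissible replacement $Sp(L_S[\cat M_{\lambda,cof}^{\op},\sS],E)$ constructed in the proof of Theorem~\ref{thm admisslbe rplcmnt} (specialized to $\cat V = \sS$), and apply Pavlov--Scholbach's rectification theorem \cite[Theorem~4.9]{pavlov2019symmetricopSsp}, whose hypotheses are discharged by the niceness of $L_S[\cat M_{\lambda,cof}^{\op},\sS]$ established in Section~\ref{sec Lpresheaf is nice}. One small imprecision worth flagging: the ``nice'' conditions (h-monoidal, pretty small, flat, tractable) are verified for the localized presheaf category $L_S[\cat M_{\lambda,cof}^{\op},\sS]$ that serves as the base for symmetric spectra, not directly for $\cat C = Sp(L_S[\cat M_{\lambda,cof}^{\op},\sS],E)$ itself; the cited Pavlov--Scholbach theorem is phrased in terms of spectra over a nice base, so the logic is fine, but the attribute should be attached to the base rather than to $\cat C$.
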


To prove these results, we first replace the given monoidal model category by another one with a cofibrant monoidal unit if necessary. This is obtained using the results of Muro \cite{Muro2015unit}, see Section \ref{sec Muro cofibrant unit}. The replacement also satisfies the hypothesis of the theorems above. 

For a monoidal model category $\cat M$ as in the Theorem \ref{thm admisslbe rplcmnt} with cofibrant unit, we obtain the following zig-zag of Quillen equivalences.
\begin{equation} \label{diag zigzag to admissible}
 \begin{tikzcd}
& \cat M \arrow[r,swap,"E", shift right]
 & \arrow [l,swap,"D", shift right] \arrow[r,"F_0", shift left]	L_S[\cat M_{\lambda,cof}^{\op},\sS] & \arrow [l,"Ev_0", shift left]  Sp(L_S[\cat M_{\lambda,cof}^{\op},\sS],E)
  & 
 \end{tikzcd}
 \end{equation}

Here, $L_S[\cat M_{\lambda,cof}^{\op},\sS]$ denotes the universal model category construction of Dugger. This is a localization of the simplicial presheaves on a set of $\lambda$-presentable cofibrant objects of $\cat M$. We equip this category with a monoidal structure using the Day convolution. The arrows at the top denote the left adjoints. The right adjoint functor $E$ is the restricted Yoneda embedding and $D$ is the left adjoint to $E$ defined in \eqref{eq def of D}.

We show that $L_S[\cat M_{\lambda,cof}^{\op},\sS]$ satisfies further properties with respect to its monoidal structure. This allows us to use the results of Pavlov and Scholbach \cite{pavlov2019symmetricopSsp} to show that the monoidal model category $ Sp(L_S[\cat M_{\lambda,cof}^{\op},\sS],E)$ is admissible. To be precise, let $E$ denote the commutative monoid in symmetric sequences in $L_S[\cat M_{\lambda,cof}^{\op},\sS]$ given by the monoidal unit of $L_S[\cat M_{\lambda,cof}^{\op},\sS]$ at each degree. Here, $ Sp(L_S[\cat M_{\lambda,cof}^{\op},\sS],E)$ denotes the monoidal model category of  $E$-modules. Indeed, this is the same as the category of $I$-spaces in  $L_S[\cat M_{\lambda,cof}^{\op},\sS]$ of Sagave and Schlichtkrull \cite{sagave2012diagramspacessymspectra}. The adjoint pair on the right hand side is the standard one given in \cite[7.3]{hovey2001symspectrageneral}; for instance, $Ev_0$ is the evaluation at degree $0$. The pair $F_0 \dashv Ev_0$ is a Quillen pair when $Sp(L_S[\cat M_{\lambda,cof}^{\op},\sS],E)$ is given the stable model structure. Furthermore, $Sp(L_S[\cat M_{\lambda,cof}^{\op},\sS],E)$ is admissible with the positive stable model structure which is Quillen equivalent via the identity functor to the stable model structure. We omit this detail in the diagram above.


Gabriel's theorem provides a classification of co-complete abelian categories with a single small
projective generator as categories of modules. Schwede and Shipley prove a similar result for stable homotopy theory. They show that every stable, simplicial, cofibrantly generated and proper  model category with a compact generator  \cite[Theorem 3.1.1]{schwede2003stablearemodules} is Quillen equivalent to modules over a ring spectrum. We use Theorem \ref{thm admisslbe rplcmnt} to prove a monoidal  version of this result. 

\begin{theorem} \label{thm morita classification}
Let $\cat M$ be a stable, combinatorial and simplicial symmetric monoidal model category whose monoidal unit is a compact generator. In this situation, $\cat M$ is strong symmetric  monoidally Quillen equivalent to $R$-modules where $R$ is a commutative ring spectrum. 

Furthermore, $R$ is uniquely determined in the following sense. If the monoidal unit of $\cat M$ is cofibrant and if $\cat M$ is (strong or weak) symmetric monoidally Quillen equivalent to $R'$-modules for another commutative ring spectrum $R'$ where each monoidal model category in the zig-zag has a cofibrant unit, then $R$ and $R'$ are weakly equivalent as commutative ring spectra.

\end{theorem}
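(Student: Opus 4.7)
The plan is to apply Theorem \ref{thm admisslbe rplcmnt} in conjunction with the classical Schwede--Shipley theorem \cite[Theorem 3.1.1]{schwede2003stablearemodules}, using admissibility to upgrade the resulting ring spectrum to a commutative one and the underlying Quillen equivalence to a strong monoidal one.

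First I would invoke Theorem \ref{thm admisslbe rplcmnt} together with the cofibrant-unit replacement of Muro from Section \ref{sec Muro cofibrant unit} to replace $\cat M$ by a strong symmetric monoidally Quillen equivalent admissible symmetric monoidal model category $\cat M'$ with cofibrant monoidal unit. A strong symmetric monoidal Quillen equivalence preserves the derived unit and preserves the property of being a compact generator, so $1_{\cat M'}$ is a compact generator of the stable, combinatorial, simplicial category $\cat M'$. Schwede--Shipley's theorem then produces a Quillen equivalence between $\cat M'$ and $S\text{-mod}$ for an associative ring spectrum $S$, which can be taken to be the derived endomorphism spectrum of the unit.

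The next step is to upgrade $S$ to a commutative ring spectrum. Since $\cat M'$ is admissible, the category of commutative monoids in $\cat M'$ carries a transferred model structure; the unit $1_{\cat M'}$ is tautologically a commutative monoid and admits a cofibrant replacement $Q1_{\cat M'}$ in this category whose endomorphism spectrum is a commutative ring spectrum $R$ weakly equivalent as an associative ring spectrum to $S$. The right Quillen functor $\Map_{\cat M'}(Q1_{\cat M'}, -)$ is lax symmetric monoidal and factors through $R\text{-mod}$, and its left adjoint $-\otimes_R Q1_{\cat M'}$ is strong symmetric monoidal and unit-preserving, so the pair is a strong symmetric monoidal Quillen equivalence. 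Composing with the zig-zag of Theorem \ref{thm admisslbe rplcmnt} back to $\cat M$ yields the existence part of the theorem.

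For uniqueness, I would propagate along a given zig-zag of (strong or weak) symmetric monoidal Quillen equivalences between $R\text{-mod}$ and $R'\text{-mod}$ with cofibrant units everywhere: at each stage the derived monoidal unit is preserved up to weak equivalence, and hence so is its derived endomorphism commutative ring spectrum; since this endomorphism commutative ring computed inside $R\text{-mod}$ (respectively $R'\text{-mod}$) recovers $R$ (respectively $R'$), the chain produces a weak equivalence $R \simeq R'$ of commutative ring spectra. The hardest step will be the strictification from the $E_\infty$-ring structure carried automatically by the endomorphism spectrum of the unit in $\cat M'$ to a genuine strict commutative ring structure compatible with the Schwede--Shipley adjunction; the admissibility of $\cat M'$ supplied by Theorem \ref{thm admisslbe rplcmnt} is precisely the ingredient that makes this strictification available.
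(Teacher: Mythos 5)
There is a genuine gap in how you propose to extract a strictly commutative ring spectrum, and you are pointed at the wrong replacement category to begin with. The admissible replacement produced by Theorem \ref{thm admisslbe rplcmnt} is $Sp(L_S[\cat M_{\lambda,cof}^{\op},\sS_*],E)$ with $E$ the constant symmetric sequence on the monoidal unit; this models $I$-spaces and is \emph{not} enriched over $Sp(\sS_*)$, so Schwede--Shipley's machinery in \cite[Theorem 3.9.3]{schwede2003stablearemodules} cannot be run on it to produce an endomorphism \emph{spectrum}. The paper deliberately swaps $E$ for the sphere-based symmetric sequence $K$ with $K_n=(S^1\otimes\I)^{\wedge n}$: this makes $Sp(\cat D)$ into a genuine spectral model category via a strong monoidal left Quillen functor $\tilde T\colon Sp(\sS_*)\to Sp(\cat D)$ (Construction \ref{construction spectral enrichment in D}), and that spectral enrichment is what lets one form mapping spectra landing in $Sp(\sS_*)$.

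Your mechanism for commutativity does not work as stated. You appeal to ``the $E_\infty$-ring structure carried automatically by the endomorphism spectrum of the unit'' and claim that admissibility of $\cat M'$ provides the strictification. But admissibility of $\cat M'$ supplies transferred model structures on operad algebras \emph{in} $\cat M'$; it says nothing about rectifying an $E_\infty$-structure on an object of $Sp(\sS_*)$ coming from an abstract Eckmann--Hilton argument, nor about compatibility of such a rectification with the Schwede--Shipley adjunction. The paper avoids $E_\infty$/strictification entirely: it takes a fibrant replacement $\I\we f\I$ of the unit in commutative monoids of $Sp(\cat D)^+$ (and this is where admissibility of the \emph{spectral} replacement is actually used), and then observes that the right adjoint $R$ of the strong monoidal functor $F=(f\I\wedge-)\circ\tilde T$ is lax symmetric monoidal, so $R(f\I)$ is a strict commutative ring spectrum on the nose. (You also write cofibrant replacement where the argument needs a fibrant one, but that is incidental.) Finally, the uniqueness you sketch by ``propagating derived endomorphism rings along the zig-zag'' is essentially a restatement of the problem; the paper instead outsources this to P\'eroux's comparison of symmetric monoidal model categories with their underlying $\infty$-categories together with Lurie's Proposition 7.1.2.7, which is what actually handles the coherence issues you are glossing over.
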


The proof of this result makes essential use of the proof of Theorem \ref{thm admisslbe rplcmnt}. Firstly, in the zig-zag of Quillen equivalences in \eqref{diag zigzag to admissible}, instead of  $E$, we use the symmetric sequence corresponding to the simplicial tensor $S^1 \otimes -$. This provides a spectral replacement of the given model category which allows us to obtain the derived endomorphism spectrum of the unit. To make sure this derived endomorphism spectrum is a  commutative ring spectrum, one needs to take a fibrant replacement of the unit as a commutative ring spectrum. For this, we make use of admissibility.

As we mentioned earlier, the $\infty$-categorical analogue of this result is due to Lurie \cite[Proposition  7.1.2.7]{lurie2012higher}. To our knowledge, there is no straightforward way of obtaining our result from Lurie's. Nikolaus and Sagave show that presentably symmetric monoidal $\infty$-categories come from symmetric monoidal model categories \cite{nikolaus2017presentably}. However, if the given presentably symmetric monoidal $\infty$-category is obtained from a symmetric monoidal model category, it is not known if the construction of Nikolaus and Sagave  gives back the monoidal model category one starts with. On the other hand, the uniqueness part of Theorem \ref{thm morita classification} follows by Lurie's theorem.

\textbf{Outline:} In Section \ref{sec preliminaries}, we start with a discussion on monoidal model categories and monoidal Quillen equivalences. After that, we define Day convolution and combinatorial monoidal model categories. Section \ref{sec admissible replacement} is devoted to the proof of Theorems \ref{thm admisslbe rplcmnt}, \ref{thm weak equivalences of operads} and \ref{thm infty category of algebras}. In Section \ref{sec categories of modules}, we prove Theorem \ref{thm morita classification}.

\textbf{Acknowledgenements:} We would like to thank the referee for his or her helpful remarks. The research of the second author was partially supported by ISF grant 1138/16. The first author was also supported by the same grant. Furthermore, the first author would like to thank the University of Haifa for the hospitality and support he received while he was working on this project.

\section{Preliminaries} \label{sec preliminaries}

\subsection{Monoidal model categories} \label{sec prelim monoidal model categories}

We recall the theory of monoidal model categories and enriched monoidal model categories. As mentioned earlier, we say monoidal when we mean symmetric monoidal for categories, model categories and functors. Furthermore, by a monoidal category, we mean a closed symmetric monoidal category.  See Definitions 4.1.1, 4.1.4 and 4.1.12 in Hovey \cite{Hovey} for the definition of a closed symmetric monoidal categories. In the following definition, our unit axiom is stronger than that of \cite{Hovey} since we don't assume $X$ to be cofibrant. This is what is called the very strong unit axiom by Muro in \cite{Muro2015unit}.

\begin{definition}
A \emph{monoidal model category} $\cat M$  is a model category whose underlying category is a monoidal category $(\cat M,\otimes,\I)$ with a product $\otimes$ and a unit $\I$ such that the monoidal structure satisfies the following compatibility conditions with respect to the model structure on $\cat M$.

\begin{enumerate}
    \item \textbf{Pushout-product axiom:} For two cofibrations $f \co U \to V$ and $g \co X \to Y$, the following map
    \[f\square g \co U \otimes Y \amalg_{U \otimes X} V \otimes X \to V \otimes Y\]
    is a cofibration. Furthermore, this is a weak equivalence if either $f$ or $g$ is a weak equivalence.
    \item \textbf{Unit axiom:} There is a cofibrant replacement $c \I\we \I$  of the unit such that for every $X$ in $\cat M$, the following map 
    \[(c \I)\otimes X \to \I \otimes X \cong X\]
    is a weak equivalence.
\end{enumerate}

\end{definition}
\begin{remark}
The second axiom above is satisfied if $\I$ is cofibrant.
\end{remark}

\begin{remark}
Schwede and Shipley define weak and strong monoidal Quillen equivalences in \cite{Schwede2003monoidalquillen}. They show that monoidal Quillen equivalences induce Quillen equivalences at the level of monoids and modules, see \cite[Theorem 3.12]{Schwede2003monoidalquillen}. All our Quillen equivalences are indeed strong monoidal Quillen equivalences, therefore, when we say monoidal Quillen equivalence, we mean strong monoidal Quillen equivalence. Except  Section \ref{sec Muro cofibrant unit},  we only consider monoidal Quillen equivalences between monoidal model categories with cofibrant units.
\end{remark} 

A \textbf{monoidal functor} $F \co \cat C \to \cat D$ between two monoidal categories $(\cat C,\otimes_{\cat C},\I_{\cat C})$ and $(\cat D,\otimes_{\cat D},\I_{\cat D})$ is a functor that is equipped with natural isomorphisms 
\[F(C_1) \otimes_{\cat D} F(C_2) \cong F(C_1 \otimes_{\cat C} C_2)\]
and 
\[I_{\cat D}\cong F(I_{\cat C})\]
which are coherently symmetric, associative  and unital, see Borceux  \cite[6.4.1]{borceux1994handbook2}.

\begin{definition} \label{def monoidal quillen equivalence} (Schwede-Shipley \cite[Definition  3.6]{Schwede2003monoidalquillen}) 
A \emph{monoidal Quillen equivalence} between monoidal model categories with cofibrant  units is a Quillen equivalence where the left adjoint is a monoidal functor.
\end{definition}

For a symmetric monoidal model category $\cat V$, a $\cat V$ model category is a model category with an action of $\cat V$ that is compatible with the model structures, see Hovey \cite[Definition  4.2.18]{Hovey}. A $\cat V$ monoidal model category is a $\cat V$ model category where the $\cat V$ action is compatible with the monoidal structure. This can be formulated as in the following definition.
\begin{definition} \label{def V monoidal model category} (Hovey \cite[Definition 4.2.20]{Hovey})
Let $\cat V$ and $\cat C$ be monoidal model categories. We say that $\cat C$ is a $V$ monoidal model category if there is a left Quillen monoidal functor 
\[F \co \cat V \to \cat C.\]
\end{definition}
In this situation, the action of an object $V$ in $\cat V$ on $C$ in $\cat C$ is given by 
\[V \otimes C := F(V) \otimes_{\cat C} C.\]

\subsection{Day convolution for monoidal categories} \label{sec Day conv}
Let \cat V be a monoidal category. Given a small monoidal \cat V enriched category $(\cat C,\otimes,\I)$, we consider the category of $V$-enriched functors $[\cat C^{op},\cat V]$. We use the following monoidal structure on $[\cat C^{op},\cat V]$ due to Day \cite{day1970dayconvolution}. 
\[
\forall F,G\in \cat V^{\cat C^{\op}}, \quad F\otimes_{\text{Day}}G= \int^{C_1,C_2\in \cat C}\hom_{\cat C}(-,C_1\otimes C_2)\otimes F(C_1)\otimes G(C_2)
\]
This is called the Day convolution. With this monoidal product, $([\cat C^{op},\cat V],\otimes_{Day},Y(\I))$ becomes a closed symmetric monoidal category where 
\[Y \co \cat C \to [\cat C^{op}, \cat V]\]
denotes the Yoneda embedding given by 
\[Y(C)(-) = \hom_{\cat C}(-,C)\]
for every $C$ in \cat C. We  use Day convolution as our monoidal product on the presheaf category because  $Y$ becomes a strong monoidal functor in this situation. 

There is also the point-wise monoidal structure on the presheaf category $ [\cat C^{op}, \cat V]$ but this does not suit our purposes. This is because this monoidal product makes no reference to the monoidal structure on $\cat C$ and therefore  does not render $Y$ into a strong monoidal functor in general. 
\subsection{Combinatorial monoidal model categories}
Here, we provide an overview of combinatorial model categories and we define what we mean by combinatorial monoidal model categories.

Let $\cat C$ be a category and let $C$ be an object in $\cat C$. For a regular cardinal $\lambda$, we say $C$ is \textbf{$\lambda$-presentable} if mapping out of
$C$ commutes with $\lambda$-filtered colimits.


A category $\cat C$ is said to be \textbf{locally $\lambda$-presentable} if it is co-complete and if there is a set of $\lambda$-presentable objects in $\cat C$ such that every object of $\cat C$ is a $\lambda$-filtered colimit of objects in this set. We say $\cat C$ is \textbf{locally presentable} if it is \textbf{locally $\lambda$-presentable} for some regular cardinal $\lambda$. 

We say a model category $\cat M$ is \textbf{$\lambda$-combinatorial} if it admits sets of generating (acyclic) cofibrations with $\lambda$-presentable (co)domains and if it is locally $\lambda$-presentable, Barwick \cite[Definition 1.21]{Barwick-loc}. We say  $\cat M$ is  \textbf{combinatorial}   if it is  $\lambda$-combinatorial  for some regular cardinal $\lambda$.

In a $\lambda$-locally presentable category $\cat C$, isomorphism classes of $\lambda$-presentable objects form a set \cite[Section 2]{Dugger-generation}, we call the corresponding small full subcategory $\cat C_{\lambda}$. For a $\lambda$-combinatorial model category $\cat M$, we denote the cofibrant objects of $\cat M_{\lambda}$  by $\cat M_{\lambda,cof}$.

For monoidal model categories, we use a stronger notion of combinatoriality. For this, we use the definition of locally $\lambda$-presentable base due to  Borceux, Quinteiro and \Rosicky  \cite[Definition 1.1]{enriched-sketches}. A monoidal category $C$ is said to be  a \textbf{locally  $\lambda$-presentable base} if it is a  locally $\lambda$-presentable category and if  $\cat C_{\lambda}$ contains the monoidal unit and if $\cat C_{\lambda}$ is closed under monoidal products.

\begin{definition} \label{def combinatorial monoidal}
A monoidal model category $\cat M$ is  $\lambda$-combinatorial if it is cofibrantly generated and its underlying monoidal category is a  locally  $\lambda$-presentable base. In other words, if $\cat M$ is $\lambda$-combinatorial as a model category and if the monoidal structure on $\cat M$ gives a monoidal structure on $\cat M_{\lambda}$, then we say the monoidal model category $\cat M$ is $\lambda$-combinatorial. This is equivalent to $\cat M$ being $\lambda$-combinatorial as a category enriched over itself.

Furthermore, $\cat M$ is said to be combinatorial if it is $\lambda$-combinatorial for some regular cardinal $\lambda$.
\end{definition}

\begin{lemma}\label{lem homotopycolimits}
Let \cat N be a model category and $J$ be a regular cardinal. Assume that \cat N admits a set of generating cofibrations with $J$-presentable (co)domains. Given a $J$-filtered colimit $\colim_{J'} N_j$ for some $J \leq J'$, the canonical map
\[\hocolim_{J'} N_j \we \colim_{J'} N_j\]
is a weak equivalence.
\end{lemma}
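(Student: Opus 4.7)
The strategy is to write $\hocolim_{J'} N_j$ as the colimit of a projectively cofibrant replacement $Q N_\bullet \trivfibr N_\bullet$ in $\cat N^{J'}$, thereby identifying the canonical map $\hocolim_{J'} N_j \to \colim_{J'} N_j$ with the $J$-filtered colimit of the levelwise weak equivalences $Q N_j \to N_j$. The remaining task is the classical fact that sufficiently filtered colimits preserve weak equivalences.

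The projective model structure on $\cat N^{J'}$ exists in the cofibrantly generated setting, with weak equivalences and fibrations detected pointwise. The functor $\colim\colon \cat N^{J'}\to \cat N$ is left Quillen (its right adjoint is the constant diagram functor), and its total left derived functor is by definition the homotopy colimit; on projectively cofibrant diagrams this derived functor agrees with $\colim$. This gives the natural identification $\hocolim_{J'} N_\bullet \simeq \colim_{J'} QN_\bullet$ and reduces the lemma to showing that $\colim_{J'}$ sends the levelwise weak equivalence $QN_\bullet \to N_\bullet$ to a weak equivalence in $\cat N$.

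To handle this, I would functorially factor each $QN_j \to N_j$ via the small object argument as $QN_j \trivcofib Z_j \trivfibr N_j$. Forming the $J$-filtered colimit, the map $\colim Z_j \to \colim N_j$ remains a trivial fibration: any lifting problem against a generating cofibration $A\to B$ with $A,B$ both $J$-presentable factors through some index $k$ of the $J$-filtered diagram, where a lift exists because $Z_k\to N_k$ is a trivial fibration. The dual argument, applied to the generating trivial cofibrations, shows that $\colim QN_j \to \colim Z_j$ is a trivial cofibration. Two-out-of-three then concludes the proof.

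The main obstacle is that the presentability hypothesis must in fact apply to the generators of trivial cofibrations as well, not only to the generators of cofibrations. This is automatic in the combinatorial context of the paper (after replacing $J$ by a common larger cardinal if necessary), but it is the step where the presentability assumption is truly essential; without it, one can control the trivial fibration half of the factorization but not the trivial cofibration half, and the two-out-of-three argument breaks down.
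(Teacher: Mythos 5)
Your core approach matches the paper's: pass to the projective model structure on $\cat N^{J'}$, take a cofibrant replacement of the diagram, identify the homotopy colimit with the colimit of the replacement, and then solve lifting problems against generating cofibrations by pushing them down to a finite stage using $J$-presentability. However, you introduce an unnecessary detour that in turn creates a spurious difficulty. When you obtain $QN_\bullet \trivfibr N_\bullet$ by factoring $\emptyset \to N_\bullet$ in the projective model structure, this map is a projective \emph{trivial fibration}, which by definition means it is a \emph{levelwise} trivial fibration: each $QN_j \to N_j$ is already a trivial fibration in $\cat N$, with no further factorization needed. Therefore the subsequent factorization $QN_j \trivcofib Z_j \trivfibr N_j$, and the attempt to control the trivial-cofibration half, are superfluous. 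The lifting argument you describe applies directly to $\colim_{J'} QN_j \to \colim_{J'} N_j$: any square with a generating cofibration $A \to B$ on the left factors (after passing to a sufficiently large index, by $J$-presentability and $J$-filteredness) through a single level $QN_j \to N_j$, where it has a solution. This shows the map of colimits is a trivial fibration outright. In particular, your concluding concern — that one also needs the generating trivial cofibrations to have $J$-presentable (co)domains, an assumption not in the statement — is an artifact of the unneeded factorization rather than a genuine requirement. The paper's proof avoids it entirely, using only the stated hypothesis on generating cofibrations.
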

\begin{proof}
There is an adjunction  \begin{equation*}
 \begin{tikzcd}
 & \cat N \arrow[r,swap,"cnst", shift right] 
 & \arrow [l,swap,"colim", shift right] [J',\cat N]
 \end{tikzcd}
 \end{equation*}
where the right adjoint  $cnst$ provides the constant diagram and the left adjoint denoted by $colim$ takes the corresponding colimit in $\cat N$. Furthermore, the functor category is given the projective model structure. Let $N_*$ denote the corresponding diagram in the diagram category and let a trivial fibration $\tilde{N}_* \trivfibr N_*$ provide a cofibrant replacement of $N_*$. We need to show that the induced map  
\[colim (\tilde{N}_*) = \hocolim_{J'} N_j \to \colim_{J'} N_j\]
is a weak equivalence where the equality above follows by the definition of homotopy colimits. Indeed, we show that the map above is a weak equivalence by showing that it satisfies the left lifting property with respect to the generating cofibrations of $\cat N$. Let $A \cofib B$ denote a generating cofibration of $\cat N$. We need to solve the lifting problem 
\begin{equation*}
    \begin{tikzcd}
     A \ar[r] \ar[d,hook] & \colim_{J'} \tilde{N}_j \ar[d]\\
     B \ar[r] \ar[ur,dashed] &  \colim_{J'} N_j.
    \end{tikzcd}
\end{equation*}
Since $A$ and $B$ are $J$-presentable, we obtain that the the horizontal maps out of them in the diagram above factors through $\tilde{N}_j \to N_j$ for some $j$. Since $\tilde{N}_* \trivfibr N_*$ is a levelwise trivial fibration, the map $\tilde{N}_j \to N_j$  is a trivial fibration and therefore one obtains the desired lifting. 
\end{proof}

The following corollary should be compared to \cite[Proposition 7.3]{Dugger-generation}.
\begin{corollary}
Let \cat N be a model category and $J$ be a regular cardinal. Assume that \cat N admits a set of generating cofibrations with $J$-presentable (co)domains. Furthermore, let $J'$ be a regular cardinal with $J \leq J'$. Given a levelwise equivalence $N_*\we N_*'$ of diagrams over  $J'$, the induced map 
\[\colim_{J'} N_j \to \colim_{J'} N_j'\]
is a weak equivalence.
\end{corollary}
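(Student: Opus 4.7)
The plan is to reduce the statement to the previous lemma by a standard two-out-of-three argument using the natural comparison map from the homotopy colimit to the colimit. Given a levelwise weak equivalence $N_* \we N'_*$ of diagrams indexed by $J'$, form the commutative square
\begin{equation*}
\begin{tikzcd}
\hocolim_{J'} N_j \ar[r] \ar[d] & \hocolim_{J'} N'_j \ar[d] \\
\colim_{J'} N_j \ar[r] & \colim_{J'} N'_j
\end{tikzcd}
\end{equation*}
where the vertical arrows are the canonical comparison maps and the horizontal arrows are induced by the levelwise equivalence.

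Since $J' \geq J$, the colimit $\colim_{J'}$ is a $J$-filtered colimit, so Lemma \ref{lem homotopycolimits} applied to both $N_*$ and $N'_*$ shows that the two vertical maps are weak equivalences. For the top horizontal map, I use that $\hocolim_{J'}$ is the total left derived functor of $\colim_{J'}$ with respect to the projective model structure on $[J', \cat N]$: a levelwise weak equivalence $N_* \we N'_*$ lifts to a weak equivalence between projective cofibrant replacements, and applying the left Quillen functor $\colim_{J'}$ to this replacement yields a weak equivalence $\hocolim_{J'} N_j \we \hocolim_{J'} N'_j$.

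With three of the four edges of the square being weak equivalences, the two-out-of-three property forces the bottom horizontal map $\colim_{J'} N_j \to \colim_{J'} N'_j$ to be a weak equivalence as well, which is exactly the statement. The only subtle point is the existence of the projective model structure on $[J', \cat N]$, which is implicit in the previous lemma and used in exactly the same way here, so no new obstacle arises.
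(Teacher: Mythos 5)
Your proposal is correct and follows essentially the same approach as the paper: form the commuting square relating $\hocolim$ and $\colim$ for $N_*$ and $N'_*$, invoke Lemma \ref{lem homotopycolimits} for the vertical maps, note that $\hocolim$ sends levelwise equivalences to weak equivalences, and conclude by two-out-of-three. The paper's proof merely leaves the justification of the top horizontal equivalence implicit, which you have spelled out correctly.
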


\begin{proof}
There is a commuting diagram 
\begin{equation*}
\begin{tikzcd}
    \ar[r,"\simeq"] \hocolim_{J'} N_j  \ar[d] & \hocolim_{J'} N_j' \ar[d] \\
    \colim_{J'} N_j \ar[r]& \colim_{J'} N_j'.
    \end{tikzcd}
\end{equation*}
Here, the vertical maps are weak equivalences due to Lemma \ref{lem homotopycolimits}. The result follows by the two out of three property of weak equivalences.
\end{proof}

\section{Admissible replacement} \label{sec admissible replacement}
In this section, we prove Theorem \ref{thm admisslbe rplcmnt}. In other words, we show that every  combinatorial and  simplicial monoidal model category is monoidally Quillen equivalent to an admissible model category.
More generally we prove this result for  combinatorial $\cat V$ monoidal model categories where $\cat V$ denotes a symmetric monoidal model category satisfying the axioms stated in the following. 
\begin{notation}\label{notation axioms for V}
For the rest of this work, let $\cat V$ denote a combinatorial symmetric monoidal model category satisfying the following properties. 
\begin{enumerate}
    \item Every object of \cat V is cofibrant. 
    \item There is a set of generating cofibrations of $\cat V$ where (co)domains of the generating cofibrations are $\aleph_0$-presentable. 
    \item The model category $\cat V$ is left proper.
 \end{enumerate}
\end{notation}

The axioms above are satisfied by the model categories of simplicial sets, pointed simplicial sets and chain complexes over a field $k$. 

For our constructions, it is important that we start with a monoidal model category whose monoidal unit is cofibrant. In Section \ref{sec Muro cofibrant unit}, we use a theorem of Muro \cite{Muro2015unit} to show that a given monoidal model category satisfying the hypothesis of Theorem \ref{thm admisslbe rplcmnt} can be replaced with a monoidal model category whose unit is cofibrant. Furthermore, this replacement is also a  combinatorial \cat V symmetric monoidal  model category. Therefore, for the rest of this section, we assume that we start with a monoidal model category with a cofibrant unit.

Let $\cat M$ be a monoidal model category as in Theorem \ref{thm admisslbe rplcmnt} whose monoidal unit is cofibrant. To prove Theorem \ref{thm admisslbe rplcmnt}, we need to construct the zig-zag of monoidal Quillen equivalences in \eqref{diag zigzag to admissible} and prove that $Sp(L_S[\cat M_{\lambda,cof}^{\op},\cat V])$ is admissible. The first Quillen equivalence is constructed in Section \ref{sec replacement with presheaf category}. Section \ref{sec Lpresheaf is nice} is devoted to the proof of the fact that   $L_S[\cat M_{\lambda,cof}^{\op},\cat V]$ is nice in the sense of Pavlov and Scholbach \cite[Definition 2.3.1]{pavlov2019symmetricopSsp}. Proposition \ref{prop spectral replacement} provides the Quillen equivalence on the right hand side and the admissibility of $Sp(L_S[\cat M_{\lambda,cof}^{\op},\cat V])$ is given in Theorem \ref{thm sp is admissble for every nice}.

\subsection{Cofibrant monoidal unit}
\label{sec Muro cofibrant unit}
 Using Muro's results, we show that every combinatorial and \cat V monoidal model category carries a monoidally Quillen equivalent  model structure where the monoidal unit is cofibrant \cite{Muro2015unit}. 

Let $\cat M$ be a combinatorial \cat V monoidal model category. Theorem 1 in Muro \cite{Muro2015unit} provides a new model structure $\tilde{\cat M}$ on the same underlying category whose monoidal unit is cofibrant. The weak equivalences of $\tilde{\cat M}$ and $\cat M$ are the same but $\tilde{\cat M}$ possibly has more cofibrations than $\cat M$, i.e.\ cofibrations of $\cat M$ are also cofibrations in $\tilde{\cat M}$. In particular, the identity functor is a left Quillen functor 
\[\cat M \to \tilde{\cat M}\]
which is the left adjoint of a monoidal Quillen equivalence. Since the unit of $\cat M$ is not cofibrant, we refer the reader to Schwede and Shipley's Definition 3.6 in \cite{Schwede2003monoidalquillen} for the definition of monoidal Quillen equivalences instead of the one given in Section \ref{sec prelim monoidal model categories}. 

Furthermore, $\tilde{\cat M}$ is combinatorial and monoidal, Muro \cite[Theorem 1]{Muro2015unit}. We only need to show that $\tilde{\cat M}$ is also a \cat V monoidal model category. This amounts to having a monoidal left Quillen functor $\cat V \to \tilde{\cat M}$. Since $\cat M$ is \cat V monoidal, there is a monoidal left Quillen functor 
\[F \co \cat V \to \cat M\]
and composing this with the left Quillen functor induced by the identity functor, we obtain the desired functor  $\cat V \to \tilde{\cat M}$. This shows that $\tilde{\cat M}$ is a \cat V monoidal model category. We obtain the following version of Muro's theorem.

\begin{theorem}[Muro \cite{Muro2015unit}] \label{thm Muro cofibrant unit}
Every combinatorial \cat V symmetric monoidal model category is symmetric monoidally Quillen equivalent to a combinatorial \cat V symmetric monoidal model category whose unit is cofibrant.
\end{theorem}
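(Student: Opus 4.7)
The plan is to invoke Muro's main theorem as a black box to obtain the replacement, and then verify that the additional structural properties (combinatoriality and $\cat V$-enrichment) transfer across the replacement. Most of the argument is already sketched in the paragraphs preceding the statement, so the task reduces to assembling these observations into a formal proof.

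First, I would apply \cite[Theorem 1]{Muro2015unit} to the given combinatorial $\cat V$ monoidal model category $\cat M$ to produce a new model structure $\tilde{\cat M}$ on the same underlying symmetric monoidal category. By Muro's theorem, $\tilde{\cat M}$ has the same weak equivalences as $\cat M$, possibly more cofibrations, a cofibrant monoidal unit, and is again a combinatorial symmetric monoidal model category. Because cofibrations and trivial cofibrations of $\cat M$ are, in particular, cofibrations and trivial cofibrations of $\tilde{\cat M}$, the identity functor $\id \co \cat M \to \tilde{\cat M}$ is left Quillen; since the classes of weak equivalences coincide, this identity is moreover a Quillen equivalence. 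The identity is strict monoidal, so in the Schwede--Shipley sense \cite[Definition 3.6]{Schwede2003monoidalquillen} (which is the appropriate one here because the unit of $\cat M$ need not be cofibrant) this is a strong symmetric monoidal Quillen equivalence.

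Next, I would transport the $\cat V$-enrichment. By hypothesis there is a monoidal left Quillen functor $F \co \cat V \to \cat M$, and I claim the composite
\[
\tilde F \co \cat V \xrightarrow{F} \cat M \xrightarrow{\id} \tilde{\cat M}
\]
serves as the required structure functor exhibiting $\tilde{\cat M}$ as a $\cat V$ monoidal model category in the sense of Definition \ref{def V monoidal model category}. The composite is monoidal because both factors are, and it is left Quillen because $F$ sends (trivial) cofibrations of $\cat V$ to (trivial) cofibrations of $\cat M$, which are in turn (trivial) cofibrations of $\tilde{\cat M}$. Combinatoriality of $\tilde{\cat M}$ is supplied directly by Muro's theorem.

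There is essentially no genuine obstacle here once Muro's result is granted; the only mild subtlety is to be careful about which notion of monoidal Quillen equivalence is in force, since the unit of the source $\cat M$ may fail to be cofibrant. For this reason I would explicitly cite \cite[Definition 3.6]{Schwede2003monoidalquillen} rather than Definition \ref{def monoidal quillen equivalence}, and only once the replacement is made do all subsequent arguments in the paper take place in the cofibrant-unit setting covered by Definition \ref{def monoidal quillen equivalence}.
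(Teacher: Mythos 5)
Your proof is essentially the same as the paper's: both invoke Muro's Theorem 1 as a black box, observe that the identity is a strict monoidal left Quillen equivalence in the Schwede--Shipley sense (citing \cite[Definition 3.6]{Schwede2003monoidalquillen} to accommodate the non-cofibrant unit on the source), and transport the $\cat V$-enrichment by composing the given structure functor $F\co\cat V\to\cat M$ with the identity left Quillen functor into $\tilde{\cat M}$. No gaps; the argument matches the paper's.
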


\subsection{Replacement with the presheaf category} \label{sec replacement with presheaf category}

Here, we construct the first monoidal Quillen equivalence in \eqref{diag zigzag to admissible}. This is analogous to Dugger's construction of universal model categories. We obtain a replacement with  a localization of the \cat V-enriched presheaves on the cofibrant $\lambda$-presentable objects of the given monoidal model category (for a sufficiently large cardinal $\lambda$). This presheaf category equipped with the Day convolution satisfies further properties on its monoidal structure and this guarantees that the symmetric spectra on the presheaf category (with the positive stable model structure) is admissible. 

Let $(\cat M,\wedge,\mathbb{I_{\cat M}})$ be a combinatorial \cat V monoidal model category with a cofibrant monoidal unit. Let $\lambda$ denote a regular cardinal  for which the following are satisfied.

\begin{enumerate}
    \item The symmetric monoidal model category $\cat M$ is  $\lambda$-combinatorial.
    \item The cofibrant replacement functor in $\cat M$ preserves $\lambda$-filtered colimits. 
    \item A cofibrant replacement of a $\lambda$-presentable object is $\lambda$-presentable.
\end{enumerate}
The last two items above follow by   Dugger \cite[Proposition 2.3]{Dugger-generation},

Suppose $\cat M$ is $\lambda$-combinatorial for some cardinal $\lambda$ and let  $\cat M_{\lambda,cof}$ denote the subcategory of $\lambda$-presentable  cofibrant objects. 

We consider the category of $\cat V$-enriched functors and $\cat V$-natural transformations from $\cat M_{\lambda,cof}^{op}$ to $\cat V$. Let $[\cat M_{\lambda,cof}^{\op},\cat V]$ denote this category. There is a fully faithful functor 
\[Y \co \cat M_{\lambda,cof} \to [\cat M_{\lambda,cof}^{\op},\cat V]\] given by the Yoneda embedding. In other words, $Y(M) = \hom(-,M)$ for every $M \in \cat M_{\lambda,cof}$. For the inclusion functor $I \co \cat M_{\lambda,cof} \to \cat M$, there is the following left Kan extension  
\begin{equation} \label{diag lKan}
 \begin{tikzcd}
 \cat M_{\lambda,cof} \arrow[dr,"Y"] \arrow[d,"I"] &  \\
 \cat M & \arrow[l,dashed,"D"] [\cat M_{\lambda,cof}^{\op},\cat V{]}
 \end{tikzcd}
 \end{equation}
 which makes the above diagram commute up to a natural isomorphism, see Kelly \cite[Proposition 4.23]{Kelly}. Furthermore by Kelly \cite[Equation 4.25]{Kelly} and the Yoneda lemma, $D$ is given by the following coend
 \begin{equation}  \label{eq def of D}
 D(F)=\int^{M\in \cat M_{\lambda,cof}}F(M)\otimes I(M).
 \end{equation}
 By Kelly \cite[Equations 3.5 and 3.70]{Kelly}, $D$ is the left adjoint of the restricted Yoneda functor $E$ given by $E(M)(M_i) = \hom(M_i,M)$ for each $M \in \cat M$ and $M_i \in \cat M_{\lambda,cof}^{op}$. We obtain the following adjoint pair:
 \begin{equation*} \label{diag the first pair}
 \begin{tikzcd}
 & \cat M \arrow[r,swap,"E", shift right]
 & \arrow [l,swap,"D", shift right] [\cat M_{\lambda,cof}^{\op},\cat V].
 \end{tikzcd}
 \end{equation*}

There is a model structure on $[\cat M_{\lambda,cof}^{\op},\cat V{]}$ where weak equivalences and fibrations are given by levelwise weak equivalences and levelwise fibrations. This is called the projective model structure. The generating cofibrations and trivial cofibrations of the projective model structure are given by
\begin{equation} \label{eq gencof for presheaves}
    \begin{split}
        I^\prime =& \{Y(A) \otimes i \mid A \in \cat M_{\lambda,cof}, i \in I\}, \\
        J^\prime =& \{Y(A) \otimes j \mid A \in \cat M_{\lambda,cof}, j \in J\}, 
    \end{split}
\end{equation}
respectively where $I$ and $J$ denote the canonical generating sets of cofibrations and trivial cofibrations of $\cat V$ respectively. By \eqref{eq def of D}, it is clear that $D$ commutes with tensoring with morphisms in $\cat V$. Therefore, we have 
\[D(Y(A) \otimes i) = D(Y(A)) \otimes i \cong A \otimes i\]
for every object $A$ in $\cat M_{\lambda,cof}$ and morphism $i$ in $\cat V$. Since $\cat M$ is a $\cat V$ model category and $A$ is cofibrant, $A\otimes i$ is a (trivial) cofibration for every (trivial) cofibration $i$.  This shows that $D$ preserves generating cofibrations and trivial  cofibrations. We obtain that $D \dashv E$ is indeed a Quillen pair. 

Now we prove that the left Quillen functor $D$ is homotopically surjective, see Dugger  \cite[Definition 3.1]{Dugger-generation}. We start with the following lemma.

\begin{lemma}\label{lemma presheaves is pretty small}
The generating cofibrations of $[\cat M_{\lambda,cof}^{\op},\cat V{]}$ given above have $\aleph_0$-presentable (co)domains.
\end{lemma}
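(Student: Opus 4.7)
The plan is to reduce the claim to the statement that, for each $A \in \cat M_{\lambda,cof}$ and each $\aleph_0$-presentable object $X$ in $\cat V$, the tensor $Y(A) \otimes X$ is $\aleph_0$-presentable in $[\cat M_{\lambda,cof}^{\op},\cat V]$. Once this is established, the lemma follows immediately: by \eqref{eq gencof for presheaves} the generating cofibrations of the projective model structure are of the form $Y(A) \otimes i$ with $A \in \cat M_{\lambda,cof}$ and $i \in I$, and the (co)domains of $i$ are $\aleph_0$-presentable in $\cat V$ by Notation \ref{notation axioms for V}(2).

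The key computation is the identification of the corepresented hom-functor of $Y(A) \otimes X$. First, the enriched Yoneda lemma gives a natural isomorphism of the internal hom $[Y(A), F] \cong F(A)$ for every presheaf $F$. Second, the adjunction between tensoring with $X$ and cotensoring yields
\[
\Hom_{[\cat M_{\lambda,cof}^{\op},\cat V]}(Y(A) \otimes X, F) \;\cong\; \Hom_{\cat V}(X, [Y(A),F]) \;\cong\; \Hom_{\cat V}(X, F(A)).
\]

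Next I use the fact that colimits in the presheaf category $[\cat M_{\lambda,cof}^{\op},\cat V]$ are computed objectwise, so for any $\aleph_0$-filtered colimit $\colim_j F_j$ one has $(\colim_j F_j)(A) = \colim_j F_j(A)$ in $\cat V$. Combining this with the $\aleph_0$-presentability of $X$ in $\cat V$,
\[
\Hom(Y(A)\otimes X,\, \colim_j F_j) \cong \Hom(X,\, \colim_j F_j(A)) \cong \colim_j \Hom(X, F_j(A)) \cong \colim_j \Hom(Y(A)\otimes X, F_j),
\]
which is precisely the statement that $Y(A) \otimes X$ is $\aleph_0$-presentable.

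There is no substantial obstacle here; the argument is essentially a bookkeeping exercise combining the enriched Yoneda lemma, the tensor-cotensor adjunction, and the pointwise computation of colimits in a presheaf category. The only point to keep in mind is that presentability is an unenriched notion, so one must check that the ordinary hom-set (and not merely the enriched hom-object) commutes with $\aleph_0$-filtered colimits; but this is automatic once the enriched computation above is performed, since $\Hom_{\cat V}(X, -)$ preserves $\aleph_0$-filtered colimits by hypothesis.
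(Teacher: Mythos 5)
Your proof is correct and follows essentially the same route as the paper's own argument: both reduce the claim to showing that $Y(A)\otimes X$ is $\aleph_0$-presentable, then compute $\Hom(Y(A)\otimes X,-)$ via the tensor--cotensor adjunction and the enriched Yoneda lemma, and conclude using the objectwise computation of colimits in the presheaf category together with the $\aleph_0$-presentability of $X$ in $\cat V$.
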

\begin{proof}
We need to show that the morphisms in $I'$ have $\aleph_0$-presentable (co)domains. Due to our standing assumptions, there is a set of generating cofibrations $I$  of $\cat V$ consisting of maps with  $\aleph_0$-presentable (co)domains. In what follows, for a given category $\cat C$, we denote the set of morphisms in $\cat C$ by $\cat C (-,-)$.

Given a filtered colimit $\colim_{j \in J} M_j$ in $[\cat M_{\lambda,cof}^{\op},\cat V]$ for some  $\aleph_0$-filtered category $J$, an $\aleph_0$-presentable $K$ in $\cat V$ and an object $A$ in $\cat M_{\lambda,cof}$, we have the following equalities.
\begin{equation*}
    \begin{split}
       [\cat M_{\lambda,cof}^{\op},\cat V](R_A \otimes K, \colim_{j \in J} M_j) \cong& \cat V(K,\hom(R_A, \colim_{j \in J} M_j))\\
        \cong& \cat V(K,(\colim_{j \in J} M_j) (A))\\
        \cong & \cat V (K, \colim_{j \in J} \hom(R_A, M_j)) \\
        \cong & \colim_{j \in J} \cat V(K, \hom(R_A,M_j))  \\
        \cong & \colim_{j \in J} [\cat M_{\lambda,cof}^{\op},\cat V](R_A \otimes K,M_j)
    \end{split}
\end{equation*}
The second and the third equalities follow by the Yoneda lemma and the first and the last equalities follow because $[\cat M_{\lambda,cof}^{\op},\cat V]$ is a $\cat V$-model category. The fourth equality follows by the assumption that $K$ is $\aleph_0$-presentable. In the equalities above, $\hom(-,-)$ denotes the $\cat V$-enriched maps in  $[\cat M_{\lambda,cof}^{\op},\cat V]$. Since the (co)domains of the maps in $I'$ are of the form $R_A \otimes K$, the chain of isomorphisms above provides the desired result.
\end{proof}

\begin{proposition}
The left Quillen functor $D$ is homotopically surjective. 
\end{proposition}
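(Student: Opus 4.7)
The plan is to verify Dugger's criterion \cite[Definition 3.1]{Dugger-generation}, namely that for every fibrant $X \in \cat M$ the derived counit $D(QE(X)) \to X$ is a weak equivalence, where $QE(X) \to E(X)$ is a cofibrant replacement in $[\cat M_{\lambda,cof}^{\op},\cat V]$. The overall strategy is to construct an explicit cofibrant replacement of $E(X)$ as a $\lambda$-filtered homotopy colimit of representables $Y(A_j)$, and then exploit the identity $D \circ Y = I$ from \eqref{diag lKan} together with Lemma \ref{lem homotopycolimits} applied to both $\cat M$ and the presheaf category.

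To carry this out, I will first factor $\emptyset \to X$ as $\emptyset \cofib QX \fibr X$, so that $QX$ is cofibrant and, because $X$ is fibrant and $QX \fibr X$ is a fibration, also fibrant. Since $\cat M$ is locally $\lambda$-presentable, I can write $X \cong \colim_{j \in J} X_j$ as a $\lambda$-filtered colimit of $\lambda$-presentable objects; by our choice of $\lambda$, the cofibrant replacement functor preserves this colimit and sends $\lambda$-presentables to $\lambda$-presentables, so $QX \cong \colim_j A_j$ with $A_j := Q(X_j) \in \cat M_{\lambda,cof}$. Because every $M \in \cat M_{\lambda,cof}$ is $\lambda$-presentable and $J$ is $\lambda$-filtered, one then computes $E(QX) \cong \colim_j Y(A_j)$. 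Each $Y(A_j)$ is projectively cofibrant (from $1_\cat V$ being cofibrant via $Y(A_j) \cong Y(A_j) \otimes 1_\cat V$ combined with the $\cat V$-enrichment of the projective model structure), so by Lemma \ref{lemma presheaves is pretty small} together with Lemma \ref{lem homotopycolimits} applied to the presheaf category, the canonical map $\hocolim_j Y(A_j) \we \colim_j Y(A_j) \cong E(QX)$ is a weak equivalence out of a cofibrant object. Ken Brown's lemma applied to the right Quillen functor $E$ and the weak equivalence $QX \we X$ between fibrant objects then exhibits $\hocolim_j Y(A_j)$ as a cofibrant replacement of $E(X)$ as well.

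Finally, applying $D$ and using that $D$ is left Quillen with $D(Y(A_j)) = A_j$, I obtain $D(\hocolim_j Y(A_j)) \simeq \hocolim_j A_j$, and Lemma \ref{lem homotopycolimits} applied to $\cat M$ itself (whose generating cofibrations have $\lambda$-presentable (co)domains) gives $\hocolim_j A_j \we \colim_j A_j = QX \we X$. Composing yields the required derived counit equivalence. The step I expect to demand the most care is the commutation $E(\colim_j A_j) \cong \colim_j E(A_j)$ in the enriched setting: it requires $\hom_\cat M(M,-)$ to preserve $\lambda$-filtered colimits in the enriched sense for each $M \in \cat M_{\lambda,cof}$, which follows from $\cat M$ being a $\lambda$-combinatorial base over itself in the sense of Definition \ref{def combinatorial monoidal} but should be spelled out carefully given that presheaves are taken in $\cat V$ rather than in $\cat M$.
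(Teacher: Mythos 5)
Your proof is correct and follows essentially the same route as the paper's: write the fibrant object as a $\lambda$-filtered colimit of $\lambda$-presentables, push through the $\lambda$-filtered-colimit-preserving cofibrant replacement to land in $\cat M_{\lambda,cof}$, identify $E$ of the result as a filtered colimit of representables, replace the colimit by a homotopy colimit via the pretty-smallness lemma, and finally apply $D$ using $D\circ Y \cong I$ and the homotopy-colimit lemma in $\cat M$. The only cosmetic differences are that you invoke Ken Brown's lemma where the paper draws a comparison square, and you perform the cofibrant replacement directly on $X$ rather than first comparing to the colimit of the cofibrant replacements; also note a small typo in the factorization: you want $\emptyset \cofib QX \trivfibr X$ (a trivial fibration), not merely a fibration, for $QX$ to be a cofibrant replacement.
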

\begin{proof}
We need to show that for every fibrant $M \in \cat M$, the natural map 
\begin{equation*}
D[E(M)]^{cof} \to M 
\end{equation*}
is a weak equivalence where $[E(M)]^{cof}$ denotes a cofibrant replacement of $E(M)$. There is a $J$-filtered colimit 
\[M = \colim_J M_j\]
for some $\lambda \leq J$ such that
each $M_j$ is in ${\cat M}_{\lambda}$. Using the functorial cofibrant replacement in $\cat M$ that preserves $\lambda$-filtered colimits, one obtains another $J$-diagram $\tilde{M}_*$ with compatible trivial fibrations $\tilde{M}_j \trivfibr M_j$ such that each $\tilde{M}_j$ is in $\cat M_{\lambda,cof}$. This factorization guarantees that  the induced map 
\[\colim_J \tilde{M}_j \trivfibr \colim_J M_j\]
is a trivial fibration. In particular, $ \colim_J \tilde{M}_j$ is fibrant. Let $\tilde{M}$ denote  $\colim_J \tilde{M}_j$.

Due to the following diagram 
\begin{equation*}
    \begin{tikzcd}
     D[E(\tilde{M})]^{cof} \ar[r]\ar[d,"\simeq"] &\tilde{M}\ar[d,"\simeq"]\\ 
    \ar[r]  D[E(M)]^{cof} & M, 
    \end{tikzcd}
\end{equation*}
it is sufficient to show that the top horizontal map above is a weak equivalence. The left hand vertical map above is a  weak equivalence as  $E$ (resp.\ $D$) preserves weak equivalences between fibrant (resp.\ cofibrant) objects.

We obtain a cofibrant replacement of $E(\tilde{M})$ as 
\[\hocolim_J \hom(-,\tilde{M_j}) \we \colim_J \hom(-,\tilde{M}_j) \cong E(\tilde M) \]
where the map above is a weak equivalence due to Lemmas \ref{lemma presheaves is pretty small} and \ref{lem homotopycolimits}. The  isomorphism above follows because
\[E(\tilde{M})(N) = \hom(N,\colim_J \tilde{M}_j) \] 
and because mapping out of the objects of $\cat M_{\lambda,cof}$ preserves $\lambda$-filtered colimits.

The top horizontal map in the  square above is given by composing in the following commuting diagram. 
\begin{equation*}
    \begin{tikzcd}[row sep=normal, column sep = tiny]
     D[E(\tilde{M})]^{cof} = D(\hocolim_J \hom(-,\tilde{M}_i)) \ar[r,"\cong"] \ar[d]&\hocolim_J D(\hom(-,\tilde{M}_j))\ar[d,"\simeq"]&\\
     D(\colim_J \hom(-,\tilde{M}_j)) \ar[r,"\cong"]& \colim_J D(\hom(-,\tilde{M}_j)) \ar[r,"\cong"]& \colim_J \tilde{M}_j = \tilde{M}
    \end{tikzcd}
\end{equation*}
Each $\hom(-,\tilde{M}_j)$ above is cofibrant, see \eqref{eq gencof for presheaves}.  Therefore,  $D(\hom(-,\tilde{M}_j))$ provides the correct homotopy type and one obtains that the top horizontal arrow is an equivalence as $D$ preserves homotopy colimits between cofibrant objects. The vertical arrow on the right hand side is a weak equivalence due to Lemma \ref{lem homotopycolimits}. Finally, the isomorphisms $D(\hom(-,\tilde{M}_j))\cong \tilde{M}_j$ follows by Diagram \eqref{diag lKan}. This provides the desired result.

\end{proof}

Furthermore, $D \dashv E$ becomes a Quillen equivalence after a localization of 
$[\cat M_{\lambda,cof}^{\op},\cat V]$. Since $[\cat M_{\lambda,cof}^{\op},\cat V]$ is left proper and combinatorial, this follows by Dugger  \cite[Proposition 3.2]{Dugger-generation}. Indeed, this localization can be described as follows. Let $\lambda'$ be a regular cardinal and let $S_{\lambda'}$ be the set of maps given by the first factor in the factorization of the natural maps 
\begin{equation} \label{eq localization maps}
    N \to E(fD(N))
\end{equation}
as a cofibration followed by a trivial fibration for every cofibrant and   $N$ in $[\cat M_{\lambda,cof}^{\op},\cat V]_{\lambda'}$ where $f$ denotes a fibrant replacement functor. As before, $[\cat M_{\lambda,cof}^{\op},\cat V]_{\lambda'}$ denotes the subcategory of $\lambda'$-presentable objects in $[\cat M_{\lambda,cof}^{\op},\cat V]$. The proof of Dugger \cite[Proposition 3.2]{Dugger-generation} shows that there is a large enough $\lambda'$ for which the localization with respect to $S_{\lambda'}$ renders $D \dashv E$ into a Quillen equivalence. Let $S$ denote  $S_{\lambda'}$ for a chosen large enough $\lambda'$. We obtain the following.  \begin{proposition}\label{prop replacement with the presheaf}
For the set of maps $S$ defined above, the Quillen adjoint pair $D \dashv E$  induces a Quillen equivalence:
 \begin{equation*} \label{diag the first pair after localization}
 \begin{tikzcd}
 & \cat M \arrow[r,swap,"E", shift right]
 & \arrow [l,swap,"D", shift right] L_S[\cat M_{\lambda,cof}^{\op},\cat V].
 \end{tikzcd}
 \end{equation*}
\end{proposition}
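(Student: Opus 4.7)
The plan is to invoke \cite[Proposition 3.2]{Dugger-generation}, which asserts that a homotopically surjective Quillen pair with left proper combinatorial source can be promoted to a Quillen equivalence by a left Bousfield localization at an explicit set of maps. Since homotopical surjectivity of $D$ has just been established in the previous proposition, the argument reduces to verifying the remaining hypotheses on the presheaf category and identifying the localizing set.

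First I would check that the projective model structure on $[\cat M_{\lambda,cof}^{\op},\cat V]$ is left proper and combinatorial. Left properness is inherited levelwise from $\cat V$, since projective weak equivalences, cofibrations, and pushouts along cofibrations are all computed pointwise. Combinatoriality follows from the explicit generating sets in \eqref{eq gencof for presheaves}, whose (co)domains are $\aleph_0$-presentable by Lemma \ref{lemma presheaves is pretty small}, together with the standard fact that $\cat V$-enriched presheaves on a small $\cat V$-category form a locally presentable category whenever $\cat V$ does.

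Next, for each regular cardinal $\mu \geq \lambda$, the set $S_\mu$ obtained from the factorizations $N \cofib \bullet \trivfibr E(fD(N))$ as $N$ runs over cofibrant $\mu$-presentable presheaves is genuinely a set, because there is only a set of isomorphism classes of such $N$. Dugger's argument then shows that for sufficiently large $\mu = \lambda'$, localizing at $S_{\lambda'}$ produces the desired Quillen equivalence. The two main checks inside his proof are: (i) $D$ sends every map in $S_{\lambda'}$ to a weak equivalence, because composing with the second factor of the factorization yields the derived unit map $N \to E(fD(N))$, which $D$ sends to a weak equivalence by homotopical surjectivity, combined with two-out-of-three applied to $D$ of the trivial fibration; and (ii) filtered-colimit preservation of $D$ propagates this conclusion from $\lambda'$-presentable cofibrant $N$ to all cofibrant $N$, thereby showing that the derived unit is a weak equivalence everywhere.

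The main delicacy is the cardinality calibration of $\lambda'$. One needs $\lambda'$ large enough that every cofibrant presheaf is a $\lambda'$-filtered colimit of cofibrant $\lambda'$-presentable presheaves and that $D$, $E$, and the fibrant replacement $f$ all interact suitably with such colimits. Since our projective presheaf category satisfies the same bookkeeping hypotheses as in Dugger's setting, this cardinality analysis transfers verbatim and no new work is required beyond appealing to his theorem.
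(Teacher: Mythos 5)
Your proposal takes essentially the same route as the paper: appeal to Dugger's Proposition 3.2, which promotes the homotopically surjective Quillen pair $D \dashv E$ to a Quillen equivalence after a left Bousfield localization of a left proper combinatorial source at the set $S$ of first factors of the maps $N \to E(fD(N))$, with $\lambda'$ chosen large enough. The only divergence is in how left properness of $[\cat M_{\lambda,cof}^{\op},\cat V]$ is justified: the paper cites Batanin--Berger in the following subsection, while you argue levelwise. Your phrase that projective cofibrations ``are computed pointwise'' overstates matters (in general there are more levelwise cofibrations than projective ones); what you actually need, and what does hold here since every object of $\cat V$ is cofibrant so that $\hom(B,A)\otimes i$ is always a cofibration, is that every projective cofibration is a \emph{levelwise} cofibration, which suffices to reduce left properness to that of $\cat V$.
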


\subsection{The monoidal structure on the presheaf category}
We equip $[\cat M_{\lambda,cof}^{\op},\cat V]$ with a symmetric monoidal product using the  Day convolution, see Section \ref{sec Day conv}. This makes $[\cat M_{\lambda,cof}^{\op},\cat V]$ a monoidal model category, Batanin-Berger \cite[Theorem 4.1]{batanin2017algoverpolynomial}. 

The levelwise monoidal structure on $[\cat M_{\lambda,cof}^{\op},\cat V]$ is not suitable for our purposes because it makes no reference to the monoidal structure on $\cat M$ and therefore does not render $D\dashv E$ into a monoidal Quillen pair in general.

In order to use the Day convolution, we need to show that $\cat M_{\lambda,cof}^{op}$ is a  monoidal category. 

\begin{proposition}
The monoidal structure on $\cat M$ induces a monoidal structure on $\cat M_{\lambda,cof}^{op}$. 
\end{proposition}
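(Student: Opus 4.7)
The plan is to unpack definitions: verify that $\cat M_{\lambda,cof} \subseteq \cat M$ is closed under the monoidal product of $\cat M$ and contains the unit, so that $\cat M_{\lambda,cof}$ is itself a symmetric monoidal subcategory of $\cat M$, from which $\cat M_{\lambda,cof}^{\op}$ inherits a symmetric monoidal structure by reversing the coherence 2-cells.

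First I would check that the unit $\mathbb{I}_{\cat M}$ lies in $\cat M_{\lambda,cof}$. It is cofibrant by our standing assumption (Section \ref{sec Muro cofibrant unit}), and it is $\lambda$-presentable because Definition \ref{def combinatorial monoidal} requires the underlying monoidal category of a $\lambda$-combinatorial monoidal model category to be a locally $\lambda$-presentable base, which by definition forces $\mathbb{I}_{\cat M} \in \cat M_\lambda$ and forces $\cat M_\lambda$ to be closed under $\otimes$. Next, given $A, B \in \cat M_{\lambda,cof}$, the latter closure property gives $A \otimes B \in \cat M_\lambda$; cofibrancy of $A \otimes B$ follows from the pushout-product axiom applied to the cofibrations $\emptyset \hookrightarrow A$ and $\emptyset \hookrightarrow B$, whose pushout-product collapses to $\emptyset \hookrightarrow A \otimes B$.

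Finally, I would assemble the structure: the associator, unitors, and symmetry of $\cat M$ restrict to natural isomorphisms on the full subcategory $\cat M_{\lambda,cof}$, and pass to natural isomorphisms on $\cat M_{\lambda,cof}^{\op}$ satisfying the (self-dual) Mac Lane coherence diagrams. The $\cat V$-enrichment of $\cat M_{\lambda,cof}^{\op}$ and the fact that the induced tensor is a $\cat V$-functor are inherited from $\cat M$ being a $\cat V$ monoidal model category in the sense of Definition \ref{def V monoidal model category}. No step presents a genuine obstacle; the statement is essentially a packaging of Definition \ref{def combinatorial monoidal} together with the pushout-product axiom, and its role is to make the Day convolution on $[\cat M_{\lambda,cof}^{\op},\cat V]$ well defined in the subsequent subsection.
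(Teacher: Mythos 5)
Your proof is correct and matches the paper's argument: the unit lies in $\cat M_{\lambda,cof}$ because it is assumed cofibrant and belongs to $\cat M_\lambda$ by Definition \ref{def combinatorial monoidal}, and $\cat M_{\lambda,cof}$ is closed under $\otimes$ because $\cat M_\lambda$ is closed by that same definition while the pushout-product axiom preserves cofibrancy. The paper leaves the restriction of coherence data, the passage to the opposite category, and the $\cat V$-enrichment implicit; your spelling these out is additional but unremarkable detail.
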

\begin{proof}
It is sufficient to note that $\cat M_{\lambda,cof}$ is closed under the monoidal product and that the unit is in $\cat M_{\lambda,cof}$. This is true for $\cat M_{\lambda}$ because it is part of our definition of combinatorial monoidal model categories, see Definition \ref{def combinatorial monoidal}. The unit of the monoidal structure is also assumed to be cofibrant and due to the pushout product axiom, monoidal product of cofibrant objects is cofibrant. 
\end{proof}
Finally, we show that $D \dashv E$ is a monoidal Quillen equivalence. 
\begin{proposition} \label{prop strongmonoidalDE}
The Quillen equivalence $D \dashv E$ between
$\cat M$ and 	$L_S[\cat M_{\lambda,cof}^{\op},\cat V]$ is a strong  monoidal Quillen equivalence. 
\end{proposition}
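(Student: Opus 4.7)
The proposition splits naturally into two assertions: that $D \dashv E$ is a Quillen equivalence, and that $D$ is a strong (symmetric) monoidal functor. The first assertion is already Proposition \ref{prop replacement with the presheaf}, since localizing the source of a Quillen adjunction does not affect what happens on the target side and the localization $L_S$ was engineered precisely to turn $D \dashv E$ into a Quillen equivalence. Thus the whole task is to exhibit natural isomorphisms $D(F \otday G) \cong D(F) \wedge D(G)$ and $D(Y(\I_{\cat M})) \cong \I_{\cat M}$ satisfying the coherence axioms of a symmetric monoidal functor.

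The cleanest way I would argue this is via the universal property of the Day convolution. The $\cat V$-category $[\cat M_{\lambda,cof}^{\op},\cat V]$ equipped with $\otday$ is the free cocomplete closed symmetric monoidal $\cat V$-category on the symmetric monoidal $\cat V$-category $\cat M_{\lambda,cof}$, with the Yoneda embedding $Y$ serving as the unit of this free construction and becoming strong monoidal by construction. Consequently, any strong monoidal $\cat V$-functor from $\cat M_{\lambda,cof}$ into a cocomplete closed symmetric monoidal $\cat V$-category extends uniquely, up to isomorphism, to a strong monoidal and cocontinuous $\cat V$-functor out of $[\cat M_{\lambda,cof}^{\op},\cat V]$. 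The inclusion $I \co \cat M_{\lambda,cof} \to \cat M$ is strong monoidal: by Definition \ref{def combinatorial monoidal} the subcategory $\cat M_{\lambda,cof}$ contains the (cofibrant) unit and is closed under $\wedge$ (the pushout–product axiom guarantees cofibrancy of $A \wedge B$ for cofibrant $A, B$). Since $D$ is defined in \eqref{diag lKan} as the left Kan extension of $I$ along $Y$ (hence is the cocontinuous extension of $I$), it must coincide, up to canonical isomorphism, with the unique strong monoidal extension provided by the universal property. This packages all the required coherences at once.

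For readers who prefer a hands-on check, the same fact can be verified directly from the coend formula \eqref{eq def of D}. Computing
\[
D(F \otday G) \;\cong\; \int^{M,\, C_1,\, C_2} \hom_{\cat M}(M, C_1 \wedge C_2) \otimes F(C_1) \otimes G(C_2) \otimes I(M),
\]
one applies Fubini for coends and the co-Yoneda lemma in the variable $M$ to obtain
\[
\int^{C_1, C_2} F(C_1) \otimes G(C_2) \otimes I(C_1 \wedge C_2),
\]
using that $I$ is fully faithful and that $C_1 \wedge C_2 \in \cat M_{\lambda,cof}$. Since $\cat M$ is closed symmetric monoidal, $\wedge$ preserves colimits in each variable, so this coend separates as $D(F) \wedge D(G)$. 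For the unit, the Day unit is $Y(\I_{\cat M})$ and $D(Y(\I_{\cat M})) \cong I(\I_{\cat M}) = \I_{\cat M}$ is immediate from \eqref{diag lKan}.

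The step I expect to require the most care is conceptual rather than computational: making sure that the strong monoidal structure on $D$ is compatible with the localized model structure on $L_S[\cat M_{\lambda,cof}^{\op},\cat V]$. This is harmless here because strong monoidality is a property of the underlying functor $D$, independent of the model structure, and both the Quillen equivalence (Proposition \ref{prop replacement with the presheaf}) and the monoidal model structure via Day convolution (Batanin–Berger) are in place. Combining these, $D \dashv E$ satisfies Definition \ref{def monoidal quillen equivalence}, giving the claimed strong monoidal Quillen equivalence.
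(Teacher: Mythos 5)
Your proposal is correct, and your primary argument is in the same spirit as the paper's, but it invokes a strictly stronger packaged result. The paper's proof considers the two bivariate functors $F_1(A,B)=D(A\otday B)$ and $F_2(A,B)=D(A)\wedge D(B)$, observes that both restrict along $Y\times Y$ to $\wedge\circ(I\times I)$ and are separately cocontinuous in each variable, and deduces $F_1\cong F_2$ from the uniqueness in Kelly's Theorem 4.51; coherence is then asserted from this canonical identification. You instead invoke the theorem (Im--Kelly/Day) that the Day convolution makes $[\cat M_{\lambda,cof}^{\op},\cat V]$ the free cocomplete symmetric monoidal $\cat V$-category on $\cat M_{\lambda,cof}$, so that the cocontinuous extension $D$ of the strong monoidal $I$ is automatically strong monoidal with all coherences built in. That theorem does the job and handles coherence more uniformly, but it is a heavier input than the elementary cocontinuous-extension uniqueness the paper uses; someone checking your proof would need to locate and verify that stronger universal property (which the paper avoids citing). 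Your hands-on coend computation is a genuinely different, more elementary route; one small gap there is the final step ``this coend separates as $D(F)\wedge D(G)$'': to pass from $\int^{C_1,C_2}(F(C_1)\otimes C_1)\wedge(G(C_2)\otimes C_2)$ to $\int^{C_1,C_2} F(C_1)\otimes G(C_2)\otimes(C_1\wedge C_2)$ you need the compatibility $(V_1\otimes M_1)\wedge(V_2\otimes M_2)\cong(V_1\otimes_{\cat V}V_2)\otimes(M_1\wedge M_2)$, i.e. that $\cat M$ is a $\cat V$-monoidal model category, not merely that $\wedge$ is cocontinuous in each variable. Finally, your reduction to showing $D$ is strong monoidal (with the Quillen equivalence already supplied by Proposition \ref{prop replacement with the presheaf}) matches the paper exactly.
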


\begin{proof}
It is sufficient to show that $D$ is a strong  symmetric monoidal functor, see Definition \ref{def monoidal quillen equivalence}. The unit map of the monoidal structure of $D$ is provided by Diagram \eqref{diag lKan}. 

For the monoidal structure map of $D$, we consider two functors $F_1$ and $F_2$ that solve the extension problem in the following diagram. 

\begin{equation} \label{diag lKanformonoidality}
 \begin{tikzcd}
 \cat M_{\lambda,cof} \times \cat M_{\lambda,cof} \arrow[dr,"Y \times Y"] \arrow[d,"\wedge \circ (I\times I)"] &  \\
 \cat M & \arrow[l,dashed,"F_i"] [\cat M_{\lambda,cof}^{\op},\cat V{] \times [\cat M_{\lambda,cof}^{\op},\cat V{]}}
 \end{tikzcd}
 \end{equation}
 Here, $\wedge$ denotes the smash product functor $\cat M \times \cat M \to \cat M$ of $\cat M$.  Furthermore, the functors $F_i$ are defined by the formulas  
\[F_1(A,B) = D(A\otday B)\]
and 
\[F_2(A,B) = D(A) \wedge D(B).\]
Using the fact that $Y$ is strong monoidal and the natural isomorphism that makes Diagram \eqref{diag lKan} commute, one observes that for each $i$, there is a canonical natural isomorphism that makes Diagram \eqref{diag lKanformonoidality} commute for $F_i$. Furthermore, the functors $F_i$ are separately enriched cocontinuous in each variable. By the universal property of the category of  presheaves, there is a unique such dashed arrow in Diagram \eqref{diag lKanformonoidality}, see Kelly \cite[Theorem 4.51]{Kelly}. In particular, we obtain that there is a natural  isomorphism $F_2 \cong F_1$ that is compatible with the natural isomorphisms that make the diagram above commute. This isomorphism serves as the monoidal structure map  
\begin{equation*}\label{eq nat iso for strong monoidality}
    D(A) \wedge D(B) \cong D(A \otday B)
\end{equation*}
of $D$. This  map is commutative, associative and unital as desired.  
\end{proof}

\subsection{The localised presheaf category is nice} \label{sec Lpresheaf is nice}

Here, we prove that $L_S[\cat M_{\lambda,cof}^{\op},\cat V]$ is nice in the sense of Pavlov and Scholbach \cite[Definition 2.3.1]{pavlov2019symmetricopSsp}. This ensures that symmetric spectra in $L_S[\cat M_{\lambda,cof}^{\op},\cat V]$ is an admissible monoidal model category. A monoidal model category is \textbf{nice} if it is left proper, pretty small (Pavlov-Scholbach \cite[Definition 2.1]{pavlov2018htpythrysymmpowers}), $h$-monoidal (Batanin-Berger \cite[Definition 1.11]{batanin2017algoverpolynomial}), flat \cite[Definition 3.2.4]{pavlov2018htpythrysymmpowers} and tractable. We first show that $[\cat M_{\lambda,cof}^{\op},\cat V]$ is nice. 

A cofibrantly generated model category $\cat C$ is pretty small if a set of generating cofibrations for $\cat C$ have $\aleph_0$-presentable domains and codomains. The presheaf category $[\cat M_{\lambda,cof}^{\op},\cat V]$ is pretty small due to Lemma \ref{lemma presheaves is pretty small}.

Recall that every object is cofibrant in $\cat V$, see Notation \ref{notation axioms for V}. This implies that $\cat V$ is strongly $h$-monoidal, see Batanin and Berger  \cite[Lemma 1.12]{batanin2017algoverpolynomial}, i.e.\ $\cat V$ is $h$-monoidal (Batanin-Berger \cite[Definition 1.11]{batanin2017algoverpolynomial}) and the monoidal product preserves weak equivalences between all objects. Therefore by Theorem 4.1 of Batanin and Berger \cite{batanin2017algoverpolynomial}, $[\cat M_{\lambda,cof}^{\op},\cat V]$ is also strongly $h$-monoidal and left proper. 

The tractability of $[\cat M_{\lambda,cof}^{\op},\cat V]$ follows by \eqref{eq gencof for presheaves}. The following proposition completes the proof of our claim that $[\cat M_{\lambda,cof}^{\op},\cat V]$ is nice.

\begin{proposition}
The monoidal model category $[\cat M_{\lambda,cof}^{\op},\cat V]$ is flat in the sense of Pavlov-Scholbach \cite[Definition 3.2.4]{pavlov2018htpythrysymmpowers}. In other words, pushout product of a cofibration and a weak equivalence is a weak equivalence in $[\cat M_{\lambda,cof}^{\op},\cat V]$. 
\end{proposition}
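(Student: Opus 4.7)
The plan is to reduce flatness to a one-sided preservation statement using left properness (already established), then to exploit the assumption that every object of $\cat V$ is cofibrant together with the strong $h$-monoidality of the Day convolution structure. Given a cofibration $f\colon A \cofib B$ between cofibrant objects of $[\cat M_{\lambda,cof}^{\op},\cat V]$ and a weak equivalence $g\colon G \we G'$, the pushout-product $f \square g$ fits in a cube whose parallel faces along $g$ are $A \otday g$ and $B \otday g$. If $A \otday (-)$ and $B \otday (-)$ preserve weak equivalences, then both of these are weak equivalences; combined with left properness and the cofibration $A \otday G \cofib B \otday G$, the cube lemma yields that $f \square g$ is a weak equivalence. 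Hence it suffices to prove that $F \otday (-)$ preserves weak equivalences whenever $F$ is cofibrant.

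To show the latter, I would proceed by cell induction on $F$. A cofibrant $F$ is a retract of a cell complex built from the generating cofibrations $Y(A) \otimes i$ of \eqref{eq gencof for presheaves}, with $A \in \cat M_{\lambda,cof}$ and $i\colon K \to L$ a generating cofibration of $\cat V$. Closure of the class \emph{``$F \otday (-)$ preserves weak equivalences''} under pushouts of cell attachments follows from left properness together with the strong $h$-monoidality of $[\cat M_{\lambda,cof}^{\op},\cat V]$ established earlier, which guarantees that tensoring a cofibration by an arbitrary object is an $h$-cofibration; closure under transfinite composition uses pretty smallness (Lemma \ref{lemma presheaves is pretty small}) so that filtered colimits commute with the generating-cofibration domains, together with the preservation of weak equivalences by filtered colimits of $h$-cofibrations. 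This reduces the claim to the base case $F = Y(A) \otimes K$ with $K$ cofibrant in $\cat V$.

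For the base case, the co-Yoneda lemma gives
\[ \bigl[(Y(A) \otimes K) \otday G\bigr](C) \;\cong\; K \otimes \int^{N \in \cat M_{\lambda,cof}} \hom(C, A \otimes N) \otimes G(N). \]
The outer tensor with $K$ preserves weak equivalences since every object of $\cat V$ is cofibrant and $\cat V$ is strongly $h$-monoidal (Notation \ref{notation axioms for V}). The coend is a weighted colimit whose weights $\hom(C, A \otimes N)$ lie in $\cat V$ and are therefore cofibrant; its coequalizer presentation and the flatness of $\cat V$ (again a consequence of all objects being cofibrant) show that a levelwise weak equivalence $G \we G'$ induces a weak equivalence on coends. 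The main obstacle is the cell-induction step, in which one must ensure that the $h$-cofibration arguments propagate cleanly through pushouts and transfinite compositions; the already-established strong $h$-monoidality and pretty smallness of $[\cat M_{\lambda,cof}^{\op},\cat V]$ are exactly the inputs needed to carry this out.
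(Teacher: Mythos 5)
Your overall reduction is sound in spirit, but the argument both misses a crucial shortcut and has a genuine gap, and the resulting proof is far more complicated than needed.

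The shortcut you missed: you correctly cite that $[\cat M_{\lambda,cof}^{\op},\cat V]$ is \emph{strongly} $h$-monoidal (established earlier via Batanin--Berger), but you only use this to produce $h$-cofibrations in your cell-induction step. By definition (Batanin--Berger \cite[Lemma 1.12]{batanin2017algoverpolynomial}), strong $h$-monoidality already says that $\otday$ preserves weak equivalences between \emph{all} objects, cofibrant or not. That makes your entire cell-induction argument --- the closure under pushouts, the transfinite-composition step, the co-Yoneda base case, the coend analysis --- unnecessary: the class of $F$ for which $F \otday(-)$ preserves weak equivalences is simply everything, with no induction. This is also why your restriction at the outset to cofibrations $f\colon A \cofib B$ \emph{between cofibrant objects} is an unexplained loss of generality; the Pavlov--Scholbach definition of flatness makes no such restriction, and once strong $h$-monoidality is invoked directly there is no reason to impose it.

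The genuine gap is in your cube-lemma step. You invoke left properness ``combined with \ldots the cofibration $A \otday G \cofib B \otday G$.'' But $G$ is merely the source of a weak equivalence; nothing makes it cofibrant, so $f \otday G$ need not be a cofibration, and the gluing lemma in the plain left-proper form does not apply. The paper's proof sidesteps this exactly: $h$-monoidality makes $f \otday G$ an $h$-cofibration, and one uses that pushouts along $h$-cofibrations preserve weak equivalences rather than left properness for actual cofibrations. Concretely, the paper's argument is a single diagram: $Y_1\otday S_1 \to Y_1 \otday S_2$ and $Y_2\otday S_1 \to Y_2\otday S_2$ are weak equivalences by strong $h$-monoidality, the pushout of the first along the $h$-cofibration $y\otday S_1$ is a weak equivalence, and two-out-of-three gives that $y \square s$ is a weak equivalence. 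Your coend/co-Yoneda analysis of the base case is also under-justified as written (coends do not generically preserve levelwise weak equivalences without cofibrancy control of the diagram), but since the whole induction is superfluous this is a secondary issue.
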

\begin{proof}
Given a cofibration $y \co Y_1 \cofib Y_2$ and a weak equivalence $s \co S_1 \we S_2$ in $[\cat M_{\lambda,cof}^{\op},\cat V]$, we have the following diagram. 
\begin{equation*} 
 \begin{tikzcd}
 Y_1 \otday S_1 \ar[r,"\simeq"] \ar[d]& Y_1 \otday S_2 \ar[d] \ar[ddr,bend left = 20] & \\
 Y_2 \otday S_1 \ar[r]\ar[drr,"\simeq", bend right=10] &Y_2 \otday S_1 \coprod_{Y_1 \otday S_1} Y_1 \otday S_2  \ar[dr] & \  \\
 \ &\  & Y_2 \otday S_2  
 \end{tikzcd}
 \end{equation*}
The morphisms marked as weak equivalences are weak equivalences because $[\cat M_{\lambda,cof}^{\op},\cat V]$ is strongly $h$-monoidal and therefore the monoidal product on it preserves weak equivalences between all objects. This also implies that $y \wedge S_1$ is an $h$-cofibration. Pushouts along $h$-cofibrations preserve weak equivalences, therefore the bottom horizontal map is a weak equivalence. By 2 out of 3 property of weak equivalences, we deduce that the pushout product of $y$ and $s$ is also a weak equivalence.
\end{proof}
Let $\cat C$ be a left proper, pretty small, tractable and flat monoidal model category and let $C$ be a set of morphisms between cofibrant objects in $\cat C$. In this case, we say that the left Bousfield localization $L_C$ is a \textbf{monoidal left Bousfield localization} if $f \otimes A$ is a $C$-local equivalence for every cofibrant $A$ in $\cat C$ and morphism $f \in C$. This in particular guarantees that $L_C \cat C$ is a monoidal model category, see Gorchinskiy-Guletskii \cite[Lemma 31]{gorchinskiy2016symmetricpwrsinabstracthtpycat}. The property of being nice is preserved by monoidal left Bousfield localizations. Pretty smallness only depends on the cofibrations of the given category and left Bousfield localization preserve the properties of being left proper and tractable, see Barwick \cite[Proposition 4.12]{Barwick-loc}. Furthermore, flatness and $h$-monoidality are preserved due to Pavlov-Scholbach \cite[Proposition 6.4]{pavlov2018htpythrysymmpowers}. 

\begin{proposition}
The left Bousfield localization $L_S$ defined in  \eqref{eq localization maps} is a monoidal left Bousfield localization. Furthermore, $L_S[\cat M_{\lambda,cof}^{\op},\cat V]$ is an $\cat V$-model category. 
\end{proposition}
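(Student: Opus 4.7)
The plan for the monoidality claim is to combine two facts already in hand: the Quillen equivalence $L_S(D \dashv E)$ of Proposition \ref{prop replacement with the presheaf}, and the strong symmetric monoidality of $D$ from Proposition \ref{prop strongmonoidalDE}. Let $s\colon N \cofib N'$ be an element of $S$, arising from the factorization $N \cofib N' \trivfibr E(fD(N))$ of the derived unit map with $N$ cofibrant and $\lambda'$-presentable, and let $A$ be any cofibrant object of $[\cat M_{\lambda,cof}^{\op},\cat V]$. Since $N$, $N'$ and $A$ are all cofibrant, $s\otday A$ is a map between cofibrant objects; because a left Quillen equivalence reflects weak equivalences between cofibrant objects, it suffices to check that $D(s\otday A)$ is a weak equivalence in $\cat M$. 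Strong monoidality identifies this map with $D(s)\wedge D(A)$.

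The decisive step is to show that $D(s)$ is a weak equivalence. The object $N'$ is cofibrant and the trivial fibration $N'\trivfibr E(fD(N))$ exhibits $N'$ as a cofibrant replacement of $E(fD(N))$. Applying $D$ and composing with the counit $\varepsilon\colon DE(fD(N))\to fD(N)$, the triangle identity for $D\dashv E$ identifies the full composite $D(N)\xrightarrow{D(s)} D(N')\to DE(fD(N))\xrightarrow{\varepsilon} fD(N)$ with the fibrant replacement map $D(N)\to fD(N)$, which is a weak equivalence. On the other hand, the partial composite $D(N')\to DE(fD(N))\xrightarrow{\varepsilon} fD(N)$ is exactly the map to which homotopical surjectivity of $D$ (established in the preceding subsection) applies, using $N'$ as cofibrant replacement of $E(fD(N))$; hence it is a weak equivalence. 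Two-out-of-three yields that $D(s)$ is a weak equivalence between cofibrant objects of $\cat M$. Ken Brown's lemma, applied to the left Quillen functor $-\wedge D(A)$ (which is left Quillen since $D(A)$ is cofibrant and $\cat M$ satisfies the pushout-product axiom), then shows $D(s)\wedge D(A)$ is a weak equivalence. Thus $s\otday A$ is an $S$-local equivalence and $L_S$ is a monoidal left Bousfield localization.

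For the second assertion, I would exhibit a strong symmetric monoidal left Quillen functor $F\colon \cat V \to L_S[\cat M_{\lambda,cof}^{\op},\cat V]$ defined by $F(V) = V\otimes Y(\I_{\cat M})$, where $\otimes$ denotes the natural $\cat V$-tensoring on the presheaf category. The representable $Y(\I_{\cat M})$ is the unit of the Day convolution and is cofibrant in the projective model structure, since every object of $\cat V$, and in particular $\I_{\cat V}$, is cofibrant by Notation \ref{notation axioms for V}. Consequently $F(V_1)\otday F(V_2)\cong (V_1\otimes V_2)\otimes\bigl(Y(\I_{\cat M})\otday Y(\I_{\cat M})\bigr) \cong F(V_1\otimes V_2)$ and $F(\I_{\cat V}) = Y(\I_{\cat M})$, so $F$ is strong monoidal. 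The pushout-product axiom for the Day convolution together with cofibrancy of $Y(\I_{\cat M})$ shows $F$ preserves (trivial) cofibrations into the unlocalized category; since the cofibrations are unchanged and old trivial cofibrations remain trivial cofibrations after localization, $F$ is also left Quillen into $L_S[\cat M_{\lambda,cof}^{\op},\cat V]$. The main obstacle in the whole argument is the identification $D(s)\simeq$ (weak equivalence) via the triangle identity and homotopical surjectivity; everything else is a formal consequence of strong monoidality, the pushout-product axiom and the behaviour of left Bousfield localizations.
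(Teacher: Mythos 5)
Your argument for the first claim rests on the same key reduction as the paper: reduce to showing that $D(s\otday A)\cong D(s)\wedge D(A)$ is a weak equivalence in $\cat M$, using strong monoidality of $D$ and that the localized $D\dashv E$ reflects weak equivalences between cofibrant objects. Where you diverge is in establishing that $D(s)$ is a weak equivalence: the paper simply observes that $s\in S$ is by definition an $S$-local equivalence between cofibrant objects and invokes the Quillen equivalence to conclude $D(s)$ is a weak equivalence, whereas you unpack what this means concretely through the triangle identity and the homotopical surjectivity statement, and apply two-out-of-three. Both are valid; yours is more self-contained but longer, and is essentially the proof of the black-boxed Quillen-equivalence fact applied to this specific $s$.

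For the second claim, you take a genuinely different route. The paper verifies Hovey's SM7 axiom directly: it reduces to a generating trivial cofibration $f$ in the localization and a generating cofibration $g$ in $\cat V$, observes (via tractability) that everything may be assumed cofibrant, reduces again to $D(f\square g)\cong D(f)\square g$ being a weak equivalence using the Quillen equivalence, and concludes since $D$ is left Quillen and $\cat M$ satisfies SM7. You instead exhibit an explicit strong monoidal left Quillen functor $F=-\otimes Y(\I_{\cat M})\colon\cat V\to L_S[\cat M_{\lambda,cof}^{\op},\cat V]$, which proves the stronger assertion that the localization is a $\cat V$ \emph{monoidal} model category in the sense of Definition \ref{def V monoidal model category}. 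This is clean, but note two implicit dependencies you should surface: first, for the induced $\cat V$-action $V\otimes C=F(V)\otday C$ to satisfy SM7 you must know the Day convolution satisfies the pushout-product axiom in the localized category, which is precisely the output of the \emph{first} half of the proposition (via Gorchinskiy--Guletskii); the paper's SM7 check, by contrast, is logically independent of the first half. Second, when you say the pushout-product axiom ``for the Day convolution'' shows $F$ preserves (trivial) cofibrations, what you actually need is the SM7 compatibility of the pointwise $\cat V$-tensoring with the projective model structure on $[\cat M_{\lambda,cof}^{\op},\cat V]$ (applied with the cofibrant object $Y(\I_{\cat M})$), not the Day product. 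These are easy to repair but worth stating. With those clarifications, your argument is correct and arguably yields a tidier conceptual picture; the paper's verification is shorter and more elementary.
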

\begin{proof}
We start with the first statement. Let $f \in S$ and $A$ be a cofibrant object in $[\cat M_{\lambda,cof}^{\op},\cat V]$. Since $[\cat M_{\lambda,cof}^{\op},\cat V]$ is a left proper, pretty small, tractable and flat monoidal model category, it is sufficient to show that $f \otday A$ is a weak equivalence after localization. By definition, $S$ only contains cofibrations between cofibrant objects. Therefore $f \otday A$ is a map between cofibrant objects. 

Since the Quillen pair $D \dashv E$ is a Quillen equivalence after localization, a map between cofibrant objects in $[\cat M_{\lambda,cof}^{\op},\cat V]$ is an $S$-local equivalence if and only if its image under $D$ is a weak equivalence in $\cat M$. Therefore it is sufficient to show that $D(f \otday A)$ is a weak equivalence. 

By Proposition \ref{prop strongmonoidalDE}, $D$ is a monoidal functor. We have 
\[D(f\otday A) \cong D(f) \wedge D(A).\]
Since $f$ is an $S$-local equivalence between cofibrant objects, $D(f)$ is a weak equivalence between cofibrant objects. In a monoidal model category,  monoidal product with a cofibrant object is a left Quillen functor,  therefore it preserves weak equivalences between cofibrant objects. Furthermore $D(A)$ is cofibrant, therefore $D(f) \wedge D(A)$ is a weak equivalence. 

Now we prove the second statement. Since the cofibrations of $L_S[\cat M_{\lambda,cof}^{\op},\cat V]$ are the same as the cofibrations of $[\cat M_{\lambda,cof}^{\op},\cat V]$, we only need to prove the case of SM7 for a generating trivial cofibration $f\co A_1 \to A_2$ in $L_S[\cat M_{\lambda,cof}^{\op},\cat V]$ and a generating  cofibration $g \co B_1 \to B_2$ in $\cat V$. Indeed, we only need to show that the map $f \square g$, the pushout product of $f$ and $g$, is an $S$-local  equivalence. 

Since $L_S[\cat M_{\lambda,cof}^{\op},\cat V]$ and $\cat V$ are tractable, we assume that $A_1$, $A_2$, $B_1$ and $B_2$ are cofibrant objects. Due to SM7 in $[\cat M_{\lambda,cof}^{\op},\cat V]$, this guarantees that $f \square g$ is also a map between cofibrant objects. Therefore, it is sufficient to show that  $D(f\square g)$ is a weak equivalence. Since $D$ is a left adjoint functor that commutes with $\cat V$-tensor (see \eqref{eq def of D}), it preserves pushout products. In particular, 
\[D(f \square g) \cong D(f)\square g.\]
Since $D$ is a left Quillen functor, $D(f)$ is a trivial cofibration in $\cat M$ and therefore $D(f) \square g$ is a weak equivalence.

\end{proof}

\subsection{Admissible replacement}

We have shown that the given simplicial and combinatorial monoidal model category  $\cat M$ (with cofibrant unit)  is monoidally Quillen equivalent to the nice monoidal model category $L_S[\cat M_{\lambda,cof}^{\op},\cat V]$.  
Therefore, in order to prove Theorem \ref{thm admisslbe rplcmnt}, we need to show that $L_S[\cat M_{\lambda,cof}^{\op},\cat V]$ is monoidally Quillen equivalent to an admissible model category. For this, we use the main result of  Pavlov-Scholbach \cite{pavlov2019symmetricopSsp}.  

Pavlov and Scholbach work in a more general setting than Hovey. They consider modules over a commutative monoid in symmetric sequences. Let $\mathbb{I}$ denote the monoidal unit of $L_S[\cat M_{\lambda,cof}^{\op},\cat V]$. 
Let $E$ be the symmetric sequence given by $\I$ at each degree equipped with the trivial action. Let $Sp(L_S[\cat M_{\lambda,cof}^{\op},\cat V],E)$ denote the category of $E$-modules in the category of symmetric sequences in $L_S[\cat M_{\lambda,cof}^{\op},\cat V]$. Indeed, the category $Sp(L_S[\cat M_{\lambda,cof}^{\op},\cat V],E)$  is equivalent to the category of $I$-spaces in $L_S[\cat M_{\lambda,cof}^{\op},\cat V]$ (Sagave and Schlichtkrull \cite{sagave2012diagramspacessymspectra}), see Pavlov-Scholbach \cite[Proposition 3.2.2]{pavlov2019symmetricopSsp}.

\begin{theorem} \label{thm sp is admissble for every nice}
(Pavlov-Scholbach \cite[1.1]{pavlov2019symmetricopSsp}) For a nice monoidal model category $\cat C$, $Sp(\cat C,R)$ with the positive stable model structure is an admissible model category where $R$ denotes a commutative monoid in symmetric sequences in \cat C. 
\end{theorem}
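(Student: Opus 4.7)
The plan is to invoke the general machinery developed by Pavlov--Scholbach \cite{pavlov2019symmetricopSsp}, so the task reduces to explaining why the hypotheses (niceness of $\cat C$) suffice to verify their axiomatic framework on $Sp(\cat C,R)$. First, I would unpack the positive stable model structure on symmetric sequences in $\cat C$ and then on $E$-modules (with $E=R$), following the presentation in Pavlov--Scholbach where $Sp(\cat C,R)$ is constructed as a left Bousfield localization of the level positive model structure. Since niceness of $\cat C$ (left properness, pretty smallness, flatness, $h$-monoidality, tractability) is preserved under forming symmetric sequences and then passing to modules over a commutative monoid, and is stable under the monoidal left Bousfield localization defining the stable structure (again by Pavlov--Scholbach's preservation results), one first checks that $Sp(\cat C,R)$ is itself nice.

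Next, the crux is to verify the stronger \emph{symmetric} counterparts of these axioms, specifically that $Sp(\cat C,R)$ is \textbf{symmetric $h$-monoidal} and \textbf{symmetric flat} in the positive setting. This is precisely the technical heart of Pavlov--Scholbach: in the positive structure, the symmetric group $\Sigma_n$ acts freely (up to homotopy) on the $n$-th smash power of a positive cofibrant object, which turns symmetric powers of generating (trivial) cofibrations into (trivial) $h$-cofibrations. Concretely, I would follow their strategy of reducing the symmetric power estimates on generating (trivial) cofibrations of $Sp(\cat C,R)$ to corresponding estimates in $\cat C$ by exploiting the levelwise nature of generating sets and the free $\Sigma_n$-action in positive degrees.

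With symmetric $h$-monoidality and flatness in hand, admissibility follows from the general transfer theorem for algebras over colored symmetric operads, which is the main result of \cite{pavlov2019symmetricopSsp}. The transfer argument runs as follows: for any colored symmetric operad $\cat O$, the free-forgetful adjunction allows one to attempt to transfer the model structure from $Sp(\cat C,R)$ to $\cat O$-algebras; the only obstruction is to show that pushouts of $\cat O$-algebras along free maps on generating trivial cofibrations are weak equivalences after forgetting. Standard filtration arguments (due to Elmendorf--Mandell, Harper, and others, and systematized by Pavlov--Scholbach) express these pushouts in terms of iterated pushout products quotiented by $\Sigma_n$-actions, and symmetric $h$-monoidality together with flatness guarantee that each filtration layer is a trivial $h$-cofibration, hence the composite is a weak equivalence.

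The main obstacle I would expect is purely bookkeeping rather than conceptual: carefully matching our conventions (the category $\cat M_{\lambda,cof}^{\op}$-enriched presheaves with Day convolution, the commutative monoid $E$ given by the unit at each level, the identification with $I$-spaces) to Pavlov--Scholbach's axiomatic framework, and verifying that the preservation lemmas for niceness under symmetric sequences, modules over commutative monoids, and monoidal Bousfield localization all apply. Since these verifications are all explicit in \cite{pavlov2019symmetricopSsp}, the proof in our setting amounts to citing their Theorem~1.1 once $L_S[\cat M_{\lambda,cof}^{\op},\cat V]$ has been shown to be nice, which was established in Section~\ref{sec Lpresheaf is nice}.
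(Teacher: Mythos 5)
This statement is not proved in the paper at all: it is quoted verbatim as Theorem~1.1 of Pavlov--Scholbach \cite{pavlov2019symmetricopSsp} and used as a black box. The authors' only contribution surrounding it is to verify (in Section~\ref{sec Lpresheaf is nice}) that $L_S[\cat M_{\lambda,cof}^{\op},\cat V]$ is nice, so that the cited theorem applies. Your proposal instead sketches, in outline, the internal proof that Pavlov--Scholbach themselves give: pass to the positive stable structure on $R$-modules in symmetric sequences, establish the symmetric variants of $h$-monoidality and flatness using the (homotopically) free $\Sigma_n$-action in positive degrees, and then invoke the filtration/transfer machinery for operad algebras. That outline is an accurate summary of the cited source's strategy, and it is a reasonable thing to record; but it is not what this paper does. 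The paper buys brevity by citing, at the cost of opacity; your route buys self-containedness at the cost of re-deriving a substantial external theorem. One small caution: the preservation of niceness under passing to symmetric sequences, to modules over a commutative monoid, and under the stabilizing Bousfield localization is not quite a formality --- in particular symmetric $h$-monoidality and symmetric flatness genuinely require the positive structure and are not inherited for free from niceness of $\cat C$; this is where the real content of Pavlov--Scholbach's argument lies, and your sketch correctly flags it as ``the technical heart'' but understandably does not carry it out.
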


In particular, $Sp(L_S[\cat M_{\lambda,cof}^{\op},\cat V],E)$ equipped with the positive stable model structure is admissible. The following proposition is due to Theorem 9.1 of Hovey \cite{hovey2001symspectrageneral}, see also Pavlov-Scholbach \cite[Example 3.3.2]{pavlov2019symmetricopSsp}.

\begin{remark}
The proposition below does not follow Hovey's notation. In Hovey's notation, one would  replace $E$ with the monoidal unit of $L_S[\cat M_{\lambda,cof}^{\op},\cat V]$.
\end{remark}

\begin{proposition} \label{prop spectral replacement}
The left Quillen functor Hovey \cite[Definition 7.3]{hovey2001symspectrageneral}
\[F_0 \co L_S[\cat M_{\lambda,cof}^{\op},\cat V] \to Sp(L_S[\cat M_{\lambda,cof}^{\op},\cat V],E)\] 
is the left adjoint of a  symmetric monoidal Quillen equivalence. Here, $Sp(L_S[\cat M_{\lambda,cof}^{\op},\cat V],E)$ is given the stable model structure.
\end{proposition}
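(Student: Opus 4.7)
The plan is to derive the proposition from the general theory of symmetric spectra developed by Hovey. The adjunction $F_0\dashv Ev_0$ between $L_S[\cat M_{\lambda,cof}^{\op},\cat V]$ and $E$-modules in symmetric sequences is constructed in \cite[Section 7]{hovey2001symspectrageneral}; here $E$ is the commutative monoid that is degreewise the monoidal unit $\I$ of $L_S[\cat M_{\lambda,cof}^{\op},\cat V]$, with trivial symmetric group actions. The first step is to equip $Sp(L_S[\cat M_{\lambda,cof}^{\op},\cat V],E)$ with the stable model structure, obtained by left Bousfield localizing the projective level model structure along the standard stabilization maps. Because $L_S[\cat M_{\lambda,cof}^{\op},\cat V]$ is left proper, combinatorial and monoidal (as established in Section \ref{sec Lpresheaf is nice}), Hovey's Theorems 8.11 and 9.1 apply and produce this stable model structure, making $F_0\dashv Ev_0$ a Quillen pair.

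The second step is to verify the Quillen equivalence. The crucial observation is that here one stabilizes with respect to the monoidal unit $\I$, so the ``suspension'' functor $-\otimes \I$ is naturally isomorphic to the identity. Hence the stabilization maps are already level weak equivalences, and a level-fibrant $E$-module is stably fibrant precisely when its structure maps are weak equivalences. Consequently the derived unit $X\to Ev_0(RF_0 X)$ and derived counit $F_0(Ev_0 M)\to M$ are levelwise weak equivalences for cofibrant $X$ and stably fibrant $M$. This is the special case of Hovey's Theorem 9.1 isolated by Pavlov and Scholbach in \cite[Example 3.3.2]{pavlov2019symmetricopSsp}, which we cite directly.

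The third step is to check that $F_0$ is strong symmetric monoidal. Since $F_0$ is the free $E$-module functor applied to an object placed in degree $0$, there are canonical isomorphisms $F_0(\I)\cong E$ and $F_0(X\otimes Y)\cong F_0(X)\otimes_E F_0(Y)$, and these are coherently associative, symmetric and unital because free module functors out of a symmetric monoidal category are always strong monoidal. The main obstacle, such as it is, lies not in the present proposition but in verifying the hypotheses of Hovey's general machinery for our category; this has already been handled by the niceness result of Section \ref{sec Lpresheaf is nice}, after which the proof of the proposition reduces to citation and bookkeeping.
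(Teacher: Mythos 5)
Your proposal is correct and takes essentially the same route as the paper, which proves the proposition simply by citing Hovey's Theorem 9.1 together with Pavlov--Scholbach's Example 3.3.2, and by noting in an accompanying remark that left properness and combinatoriality of $L_S[\cat M_{\lambda,cof}^{\op},\cat V]$ (rather than Hovey's cellularity hypothesis) ensure the left Bousfield localization producing the stable model structure. You have merely unpacked the citations a bit more explicitly — in particular the observation that stabilizing against $\I$ makes $-\otimes\I$ the identity, which is exactly what makes Hovey's criterion in Theorem 9.1 for $F_0 \dashv Ev_0$ to be a Quillen equivalence automatic, and the routine check that $F_0$ is strong monoidal as a free-module functor.
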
 

\begin{remark}
Hovey works in the setting of left proper cellular  model categories in \cite{hovey2001symspectrageneral}. The cellularity assumption is needed in order to make sure that certain left Bousfield localizations exists. Namely, in order to localize the projective model structure on symmetric spectra to obtain the stable model structure. In our case, $L_S[\cat M_{\lambda,cof}^{\op},\cat V]$ is indeed cellular but we do not need this. Since $L_S[\cat M_{\lambda,cof}^{\op},\cat V]$ is left proper and combinatorial, the relevant left Bousfield localizations are guaranteed to exist in $Sp(L_S[\cat M_{\lambda,cof}^{\op},\cat V])$ which is also left proper and combinatorial. 
\end{remark}

Finally, we prove Theorems \ref{thm admisslbe rplcmnt}, \ref{thm weak equivalences of operads} and \ref{thm infty category of algebras}.  The following theorem is a  generalization of Theorems \ref{thm admisslbe rplcmnt} and \ref{thm weak equivalences of operads} since the model category of simplicial sets satisfy the axioms for $\cat V$ given in Notation \ref{notation axioms for V}.  
\begin{theorem}\label{thm generalized admssible and w e of operads}
Let \cat V be as in Notation \ref{notation axioms for V}. Every combinatorial $\cat V$ symmetric monoidal model category is symmetric monoidally Quillen equivalent to an  admissible symmetric monoidal model category where a  weak equivalence of 
operads induce a Quillen equivalence between the model categories of the corresponding algebras.
\end{theorem}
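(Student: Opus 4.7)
The plan is to assemble the zig-zag of symmetric monoidal Quillen equivalences displayed in \eqref{diag zigzag to admissible} and then invoke the admissibility package of Pavlov and Scholbach. Given a combinatorial $\cat V$ symmetric monoidal model category $\cat M$, I would first reduce to the case of a cofibrant monoidal unit via Theorem \ref{thm Muro cofibrant unit}; this reduction preserves both combinatoriality and the $\cat V$ enrichment, so no hypothesis of the theorem is lost. From here on I may assume that the unit of $\cat M$ is cofibrant.

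Next I would choose a regular cardinal $\lambda$ large enough that $\cat M$ is $\lambda$-combinatorial, that the cofibrant replacement functor preserves $\lambda$-filtered colimits, and that the cofibrant replacement of a $\lambda$-presentable object remains $\lambda$-presentable. With these choices, Proposition \ref{prop replacement with the presheaf} provides a Quillen equivalence between $\cat M$ and $L_S[\cat M_{\lambda,cof}^{\op},\cat V]$, and Proposition \ref{prop strongmonoidalDE} promotes it to a strong symmetric monoidal Quillen equivalence. Section \ref{sec Lpresheaf is nice} shows that $L_S[\cat M_{\lambda,cof}^{\op},\cat V]$ is nice in the sense of Pavlov-Scholbach; combined with Proposition \ref{prop spectral replacement}, this yields the strong monoidal Quillen equivalence $F_0 \dashv Ev_0$ with $Sp(L_S[\cat M_{\lambda,cof}^{\op},\cat V],E)$ in the stable model structure.

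For the admissibility conclusion and the operad-invariance conclusion, I would appeal to Theorem \ref{thm sp is admissble for every nice} applied to the nice monoidal model category $L_S[\cat M_{\lambda,cof}^{\op},\cat V]$, together with Theorems 4.6 and 4.9 of \cite{pavlov2019symmetricopSsp}; these give both that $Sp(L_S[\cat M_{\lambda,cof}^{\op},\cat V],E)$ with the positive stable model structure is admissible, and that a weak equivalence of operads in it induces a Quillen equivalence between the associated categories of algebras. Since the identity functor is a Quillen equivalence between the stable and positive stable model structures on $Sp(L_S[\cat M_{\lambda,cof}^{\op},\cat V],E)$, composing with the preceding equivalences produces the desired zig-zag of strong symmetric monoidal Quillen equivalences from $\cat M$ to an admissible symmetric monoidal model category enjoying the operad-invariance property.

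The main obstacle, and the genuinely technical point, has already been handled in Section \ref{sec Lpresheaf is nice}: verifying that the Day convolution monoidal structure on the $\cat V$-enriched presheaf category is pretty small, $h$-monoidal, flat, tractable and left proper, and that the monoidal left Bousfield localization at $S$ preserves all of these properties. Once those structural facts are in hand, the proof of the theorem itself reduces to stitching together the cited results, so I expect no further difficulty beyond bookkeeping the cardinal $\lambda$ and recording that each step preserves the $\cat V$ monoidal structure.
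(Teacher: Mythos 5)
Your proposal is correct and follows essentially the same chain of reductions and Quillen equivalences as the paper's proof: Muro's cofibrant-unit replacement, then the presheaf replacement $L_S[\cat M_{\lambda,cof}^{\op},\cat V]$ via Propositions \ref{prop replacement with the presheaf} and \ref{prop strongmonoidalDE}, verification of niceness in Section \ref{sec Lpresheaf is nice}, the passage to $Sp(L_S[\cat M_{\lambda,cof}^{\op},\cat V],E)$ via Proposition \ref{prop spectral replacement}, and finally invoking Pavlov--Scholbach's admissibility and operad-invariance theorems together with the identity Quillen equivalence between stable and positive stable model structures. The only cosmetic difference is that you also cite Theorem 4.9 of \cite{pavlov2019symmetricopSsp}, which the paper reserves for Theorem \ref{thm infty category of algebras}, but this does not affect the argument.
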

\begin{proof}[Proof of Theorems \ref{thm generalized admssible and w e of operads} and \ref{thm infty category of algebras}]
We start with the proof of Theorem \ref{thm generalized admssible and w e of operads}. Let \cat N be a symmetric monoidal model category satisfying the hypothesis of the theorem. Theorem \ref{thm Muro cofibrant unit} states that \cat N is strong monoidally Quillen equivalent to another symmetric monoidal model category \cat M that also satisfies the hypothesis of the theorem and has a cofibrant monoidal unit. 

Propositions \ref{prop replacement with the presheaf} and \ref{prop strongmonoidalDE} state  that $\cat M$ is strong monoidally Quillen equivalent to the presheaf category $L_S[\cat M_{\lambda,cof}^{\op},\cat V]$. Furthermore, we prove in Section \ref{sec Lpresheaf is nice} that $L_S[\cat M_{\lambda,cof}^{\op},\cat V]$ is nice. 

Finally, $L_S[\cat M_{\lambda,cof}^{\op},\cat V]$  is strong monoidally Quillen equivalent to $Sp(L_S[\cat M_{\lambda,cof}^{\op},\cat V],E)$ with the stable model structure due to Proposition \ref{prop spectral replacement}.
With the stable model structure, $Sp(L_S[\cat M_{\lambda,cof}^{\op},\cat V],E)$ may not be admissible. However, this model structure is monoidally Quillen equivalent to the positive stable model structure through the identity functor, Pavlov-Scholbach \cite[Proposition 3.3.1]{pavlov2019symmetricopSsp}. As desired, the positive stable model structure on $Sp(L_S[\cat M_{\lambda,cof}^{\op},\cat V],E)$  is admissible due to Proposition \ref{thm sp is admissble for every nice}. This provides the admissible replacement of $\cat N$.

Furthermore, weak equivalence of operads induce a Quillen equivalence of the corresponding model categories of algebras in $Sp(L_S[\cat M_{\lambda,cof}^{\op},\cat V],E)$ due to Pavlov-Scholbach \cite[Theorem 4.6]{pavlov2019symmetricopSsp}. This finishes the proof of Theorem \ref{thm generalized admssible and w e of operads}.

For Theorem \ref{thm infty category of algebras}, we work in the setting where $\cat V $ is the model category of simplicial sets. In this situation, $Sp(L_S[\cat M_{\lambda,cof}^{\op},\cat \sS],E)$ satisfies the desired property due to Pavlov-Scholbach \cite[Theorem 4.9]{pavlov2019symmetricopSsp}.
\end{proof}

\section{Categories of modules} \label{sec categories of modules}
This section is devoted to the proof of Theorem \ref{thm morita classification}. In other words, we show that every stable, combinatorial and simplicial monoidal model category whose monoidal unit is a compact generator is  monoidally Quillen equivalent to modules over a commutative ring spectrum. Furthermore, we show that this commutative ring spectrum is unique.

\subsection{Replacement with a spectral model category}

Here, we show that every stable, combinatorial and simplicial symmetric monoidal model category \cat N is monoidally Quillen equivalent to a spectral  one. 

As before, one replaces $\cat N$ with another stable, combinatorial and simplicial symmetric monoidal model category $\cat M$ whose monoidal unit is cofibrant, see Theorem \ref{thm Muro cofibrant unit}. Since $\cat N$ is pointed and since   $\cat M$ is equivalent to \cat N as a category, $\cat M$ is also pointed. 

 Since \cat M is a pointed simplicial model category, it is also an $\sS_*$-model category in a natural way where $\sS_*$ denotes the category of pointed simplicial sets, Hovey \cite[Definition 4.2.19]{Hovey}.
 As before, $\cat M$ is strong monoidally Quillen equivalent to $L_S[\cat M_{\lambda,cof}^{\op},\sS_*]$ where $\lambda$ is as in Section \ref{sec replacement with presheaf category} and $S$ is as in Proposition \ref{prop replacement with the presheaf}. This follows by Propositions \ref{prop replacement with the presheaf} and \ref{prop strongmonoidalDE}.
 
 \begin{notation}\label{notation positive stable model structure}
For a given nice $\sS_*$ symmetric monoidal model category $(\cat C,\otimes_{\cat C},\I_{\cat C})$, let $K$ denote the symmetric sequence in \cat C given by $K_n = (S^1 \otimes \I_{\cat C})^{\otimes_{\cat C}n}$ at degree $n$. We denote the  model category of $K$-modules equipped with the stable model structure by $Sp(\cat C)$.  Let $A$ be a monoid in $Sp(\cat C)$. We denote the  category of $A$-modules by $A\textup{-mod}$ and unless otherwise stated, we assume that this category is equipped with the stable model structure. The  category of $A$-modules  equipped with the \textit{positive} stable model structure is denoted by $A\textup{-mod}^+$. Similarly, we write $Sp(\cat C)^+$ when we are using the positive stable model structure on $Sp(\cat 
 C)$.
 \end{notation}
 \begin{notation}\label{notation for D}
 For the rest of this work, let $(\cat D, \otimes_{\cat D}, \I_{\cat D})$ denote a stable,  combinatorial and nice  $\sS_*$ symmetric monoidal model category. Furthermore, assume that  $\I_{\cat D}$ is cofibrant and assume that $\I_{\cat D}$ is a compact generator of the homotopy category of \cat D. We denote the monoidal product and the monoidal unit of  $Sp(\cat D)$ as in $(Sp(\cat D),\wedge,\I)$.
 \end{notation}

The following is a consequence of  Hovey \cite[Theorem 9.1]{hovey2001symspectrageneral}. 
\begin{proposition}\label{prop spectra in a stable model category}
Let $\cat D$ be as in Notation \ref{notation for D}.  In this situation, $Sp(\cat D)$ is strong symmetric monoidally Quillen equivalent to $\cat D$. Furthermore, the monoidal unit of $Sp(\cat D)$ is a compact generator of the homotopy category of $Sp(\cat D)$.
\end{proposition}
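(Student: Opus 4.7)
The plan is to invoke Hovey's Theorem 9.1 from \cite{hovey2001symspectrageneral} directly, exploiting the fact that $\cat D$ is already stable so that stabilization does not alter the homotopy theory.

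First I would check that the stable model structure on $Sp(\cat D)$ is well-defined. The projective model structure on $K$-modules in symmetric sequences over $\cat D$ is standard, and the stabilization is obtained as a left Bousfield localization. Hovey's original treatment requires cellularity, but as noted in the remark following Proposition \ref{prop spectral replacement}, combinatoriality together with left properness, both of which are inherited from $\cat D$ being nice, suffice to guarantee the existence of the relevant localization. The standard adjunction
\begin{equation*}
F_0 \co \cat D \rightleftarrows Sp(\cat D) \co Ev_0
\end{equation*}
of \cite[Definition 7.3]{hovey2001symspectrageneral} is then a Quillen adjunction with respect to the stable model structure.

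Next I would apply \cite[Theorem 9.1]{hovey2001symspectrageneral}, whose hypothesis is precisely that the endofunctor $- \otimes_{\cat D} K = - \otimes_{\cat D} (S^1 \otimes \I_{\cat D})$ be a Quillen equivalence on $\cat D$. Since $\cat D$ is stable, $\sS_*$-enriched, and $\I_{\cat D}$ is cofibrant, this endofunctor is naturally isomorphic to the suspension functor $S^1 \otimes -$ and is therefore a Quillen equivalence. Hovey's theorem then yields that $F_0 \dashv Ev_0$ is a Quillen equivalence. For the strong symmetric monoidal part, I would verify from the Day-convolution-style construction of the smash product on symmetric spectra that $F_0$ is strong symmetric monoidal: there are coherent natural isomorphisms $F_0(\I_{\cat D}) \cong \I$ and $F_0(X) \wedge F_0(Y) \cong F_0(X \otimes_{\cat D} Y)$, which follow by inspection of $F_0$ as the free $K$-module functor from degree $0$.

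Finally, for the compact generator claim, the derived functor of $F_0$ gives a triangulated equivalence between the homotopy categories of $\cat D$ and $Sp(\cat D)$ carrying $\I_{\cat D}$ to $\I$. Since both compactness and generation are invariants of triangulated equivalences preserving arbitrary coproducts, $\I$ inherits these properties from $\I_{\cat D}$. The main obstacle I anticipate is bookkeeping around the cellularity-versus-combinatoriality replacement in Hovey's machinery and the precise compatibility of the smash product on $Sp(\cat D)$ with the monoidal structure on $\cat D$ under $F_0$, but both points are essentially formal and have been addressed earlier in the paper.
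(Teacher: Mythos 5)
Your proposal is correct and takes essentially the same route as the paper, which states the result as an immediate consequence of Hovey's Theorem 9.1 without further elaboration. You have simply filled in the intended details: verifying the hypothesis that $-\otimes_{\cat D}(S^1\otimes \I_{\cat D})$ is a Quillen equivalence via stability, addressing the cellularity-versus-combinatoriality substitution (as the paper does in the remark following Proposition \ref{prop spectral replacement}), and transporting compactness and generation of $\I_{\cat D}$ across the resulting triangulated equivalence, which carries $\I_{\cat D}$ to $\I$ because $F_0$ is strong symmetric monoidal.
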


 Ultimately, we are interested in the case  $\cat D = L_S[\cat M_{\lambda,cof}^{\op},\sS_*]$. We have the following.
 
\begin{proposition}\label{prop stable spectral  replacement}
Let $\cat N$ be a symmetric monoidal model category satisfying the hypothesis of Theorem \ref{thm morita classification}. Also, let \cat M, $\lambda$ and $S$ be as above. The presheaf category $L_S[\cat M_{\lambda,cof}^{\op},\sS_*]$ satisfies the assumptions for $\cat D$ given in Notation \ref{notation for D}. Furthermore, \cat N is strong symmetric  monoidally Quillen equivalent to  $Sp(L_S[\cat M_{\lambda,cof}^{\op},\sS_*])$. 
\end{proposition}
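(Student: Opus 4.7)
The plan is to verify each of the hypotheses listed in Notation \ref{notation for D} for the model category $\cat D = L_S[\cat M_{\lambda,cof}^{\op},\sS_*]$, and then assemble the Quillen equivalence by chaining the replacements already constructed in the paper.

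First I would check that $L_S[\cat M_{\lambda,cof}^{\op},\sS_*]$ is a combinatorial, nice $\sS_*$ symmetric monoidal model category whose unit is cofibrant. Combinatoriality and niceness were established in Section \ref{sec Lpresheaf is nice} (this applies with $\cat V = \sS_*$, which satisfies the axioms of Notation \ref{notation axioms for V}). The monoidal unit of the presheaf category under Day convolution is $Y(\I_{\cat M})$; since $\I_{\cat M}$ is cofibrant (by the prior application of Theorem \ref{thm Muro cofibrant unit}) and lies in $\cat M_{\lambda,cof}$, the representable $Y(\I_{\cat M})$ is cofibrant in the projective model structure by \eqref{eq gencof for presheaves}, and cofibrancy is preserved under left Bousfield localization.

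Next I would verify stability and the compact generator property. Stability is a property of the underlying homotopy category; since $\cat M$ is stable by hypothesis and Propositions \ref{prop replacement with the presheaf} and \ref{prop strongmonoidalDE} give a Quillen equivalence $\cat M \simeq_Q L_S[\cat M_{\lambda,cof}^{\op},\sS_*]$, stability transfers to the presheaf category. Likewise, the monoidal unit $\I_{\cat M}$ is a compact generator of $\textup{Ho}(\cat M)$ by the hypothesis of Theorem \ref{thm morita classification}, and $D$ is strong monoidal with $D(Y(\I_{\cat M})) \cong \I_{\cat M}$ (Proposition \ref{prop strongmonoidalDE}); since the derived adjoint equivalence $LD \dashv RE$ is symmetric monoidal and sends the unit to the unit, it preserves the property of being a compact generator. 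Hence the unit of $L_S[\cat M_{\lambda,cof}^{\op},\sS_*]$ is a compact generator of its homotopy category.

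Finally I would assemble the chain of monoidal Quillen equivalences. Theorem \ref{thm Muro cofibrant unit} provides $\cat N \simeq_Q \cat M$, Propositions \ref{prop replacement with the presheaf} and \ref{prop strongmonoidalDE} provide $\cat M \simeq_Q L_S[\cat M_{\lambda,cof}^{\op},\sS_*]$ as strong symmetric monoidal Quillen equivalences, and having just checked that $L_S[\cat M_{\lambda,cof}^{\op},\sS_*]$ satisfies the assumptions of Notation \ref{notation for D}, Proposition \ref{prop spectra in a stable model category} delivers a strong symmetric monoidal Quillen equivalence $L_S[\cat M_{\lambda,cof}^{\op},\sS_*] \simeq_Q Sp(L_S[\cat M_{\lambda,cof}^{\op},\sS_*])$. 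Composing these zig-zags yields the desired monoidal Quillen equivalence between $\cat N$ and $Sp(L_S[\cat M_{\lambda,cof}^{\op},\sS_*])$.

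The step I expect to be the main obstacle is the transfer of the compact generator condition under the Quillen equivalence $D \dashv E$, because this is the only place where we need to use something specific about the structure of $\cat M$'s homotopy category beyond formal properties already established. The argument is ultimately a formal consequence of $D$ being strong monoidal and the derived functors forming an equivalence of triangulated categories, but it is worth spelling out carefully.
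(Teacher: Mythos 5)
Your proof is correct and matches the paper's (implicit) approach: the paper states Proposition \ref{prop stable spectral  replacement} without an explicit proof, since it is meant to follow by assembling exactly the ingredients you cite — Theorem \ref{thm Muro cofibrant unit} for passing from $\cat N$ to $\cat M$ with cofibrant unit, Propositions \ref{prop replacement with the presheaf} and \ref{prop strongmonoidalDE} for the strong monoidal Quillen equivalence with the localized presheaf category, Section \ref{sec Lpresheaf is nice} for niceness, and Proposition \ref{prop spectra in a stable model category} for the spectralization. Your observation that transferring the compact-generator property requires noting that the derived equivalence is triangulated, coproduct-preserving, and sends unit to unit (via strong monoidality of $D$) is the correct way to fill in the one step the paper leaves unstated.
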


  In order to prove Theorem \ref{thm morita classification}, we need to show that $Sp(\cat D)$ is monoidally Quillen equivalent to the model category of $R$-modules for some commutative ring spectrum $R$. For this, we need to consider a spectral enrichment of $Sp(\cat D)$.
 \begin{construction}\label{construction spectral enrichment in D}
 Since \cat D is an $\sS_*$ monoidal model category, there is a left Quillen functor 
 $T \co  \sS_* \to \cat D$.
 Following the discussion after Definition 7.2 of Hovey \cite{hovey2001symspectrageneral} and  Hovey  \cite[Theorem 8.11]{hovey2001symspectrageneral}, one obtains that $Sp(\cat D)$ is a spectral symmetric monoidal model category. In other words, there is a canonical monoidal left  Quillen  functor
 \[\tilde{T} \co   Sp(\sS_*) \to Sp(\cat D).\]
 We denote the right adjoint of $\tilde{T}$ by $\tilde{U}$.
 
 \end{construction}
 
 \begin{proposition}\label{prop tilde T between positive}
 The adjoint pair $\tilde{T} \dashv \tilde{U}$ in Construction \ref{construction spectral enrichment in D} is also a Quillen pair between $Sp(\sS_*)^+$ and $Sp(\cat D)^+$.
 \end{proposition}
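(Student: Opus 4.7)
The plan is to verify directly that $\tilde{T}$ sends positive cofibrations to positive cofibrations and positive trivial cofibrations to positive trivial cofibrations. The starting point is a compatibility between $\tilde{T}$ and the free spectrum functors $F_n^{\sS_*}$ and $F_n^{\cat D}$: since $\tilde{T}$ is a strong monoidal left adjoint built level-wise from $T$, for $A\in\sS_*$ and $Y\in Sp(\cat D)$ one has
\[
\hom(\tilde{T}(F_n^{\sS_*} A),Y)\cong \hom(A, U(Y_n))\cong \hom(TA, Y_n)\cong \hom(F_n^{\cat D}(TA), Y),
\]
yielding a natural isomorphism $\tilde{T}\circ F_n^{\sS_*}\cong F_n^{\cat D}\circ T$. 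Here I use the description of $\tilde{U}$ as the level-wise application of $U$.

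Using this, I would first show that $\tilde{T}$ preserves positive cofibrations. The positive projective model structure on $Sp(\cat D)$ is cofibrantly generated by $\{F_n^{\cat D} i : n\geq 1, \ i\text{ a generating cofibration of }\cat D\}$, and the positive stable model structure, being a left Bousfield localization of the positive projective one, has exactly the same cofibrations. Applying $\tilde{T}$ to a generating positive cofibration $F_n^{\sS_*} j$ of $Sp(\sS_*)^+$ yields $F_n^{\cat D}(Tj)$; since $T\co \sS_*\to\cat D$ is left Quillen, the map $Tj$ is a cofibration in $\cat D$, and so $F_n^{\cat D}(Tj)$ is a positive cofibration of $Sp(\cat D)^+$. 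Being a left adjoint, $\tilde{T}$ preserves transfinite compositions, pushouts and retracts, and hence preserves all positive cofibrations.

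For the trivial cofibrations I would leverage the fact that the positive stable weak equivalences in $Sp(\cat D)^+$ coincide with the stable weak equivalences in $Sp(\cat D)$; this is essentially Pavlov-Scholbach \cite[Proposition 3.3.1]{pavlov2019symmetricopSsp}, which is what makes the identity a Quillen equivalence between the two stable structures. A positive trivial cofibration is then in particular a positive cofibration (hence a non-positive stable cofibration) and a stable weak equivalence, so it is a stable trivial cofibration. Since $\tilde{T}$ is known to be left Quillen between the two stable model structures by Construction \ref{construction spectral enrichment in D}, its image is a stable trivial cofibration in $Sp(\cat D)$, and combined with the cofibration-preservation step above it is therefore a positive trivial cofibration in $Sp(\cat D)^+$. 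The main point to be careful about is the coincidence of positive and non-positive stable weak equivalences in the setting of the nice monoidal model category $\cat D$; once that is invoked, the argument is a routine unwinding of what the positive model structure adds to the picture.
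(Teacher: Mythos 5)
Your proof is correct. The overall strategy is the same as the paper's: exploit the levelwise description of $\tilde{T}\dashv\tilde{U}$ for the (co)fibration step, and then use that positive and non-positive stable weak equivalences coincide (Pavlov--Scholbach \cite[Proposition 3.3.1]{pavlov2019symmetricopSsp}) to upgrade to trivial cofibrations. The one genuine difference is in how you verify that $\tilde T$ preserves positive cofibrations: the paper argues dually, observing that positive trivial fibrations are precisely levelwise trivial fibrations away from degree $0$ and that $\tilde U$, being levelwise $U$, preserves these; you instead establish the compatibility $\tilde T\circ F_n^{\sS_*}\cong F_n^{\cat D}\circ T$ and check $\tilde T$ on the generating positive cofibrations $F_n^{\sS_*}j$, $n\geq 1$. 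Both arguments turn on the same structural fact (the levelwise nature of the adjunction), and both are correct; the paper's formulation is a bit more economical since it avoids invoking the free-spectrum description, while yours makes the interaction with the generating sets explicit. The trivial-cofibration step is essentially verbatim what the paper does.
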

 \begin{proof}
We start by describing the functor  $\tilde{T}$ in detail. Given a category \cat C, let $\Sigma \cat C$ denote the category of symmetric sequences in \cat C.
There is a  symmetric sequence $S$ in $\sS_*$ is given by $(S^1)^{\wedge k}$ in degree $k$. Furthermore, recall that $K$ is the symmetric sequence in  $\cat D$ given by $K_k = (S^1)^{\wedge k} \otimes \II_{\cat D}$. With this notation, the categories of $S$-modules and $K$-modules are $Sp(\sS_*)$ and  $Sp(\cat D)$ respectively. 

Recall that there is a monoidal left Quillen functor 
\[T \co \sS_* \to \cat D.\]
Let $U$ denote the right adjoint of $T$. Applying $T$ levelwise makes $\Sigma \cat D$ into a $\Sigma \sS_*$ monoidal model category, see Hovey  \cite[Section 7]{hovey2001symspectrageneral}. In other words, the functor $\Sigma \sS_* \to \Sigma \cat D$ given by $T$ at each level is  monoidal. Furthermore, the right adjoint to this functor is also given by $U$ at each level as explained in Hovey \cite[Section 7]{hovey2001symspectrageneral}.
The levelwise application of $T$ to $S$ gives $K$ and therefore there is an adjoint pair  between $S$-modules and $K$-modules. This is indeed the adjoint pair $\tilde{T} \dashv \tilde{U}$. This shows that $\tilde{U}$ is given by levelwise application of $U$ on the underlying symmetric sequences.

Since localizations do not change trivial fibrations, the trivial fibrations in the positive stable model structure of $Sp(\sS_*)$ ($Sp(\cat D)$) are given by the maps that are trivial fibrations at each level of $\Sigma \sS_*$ ($\Sigma \cat D$) except possibly in degree zero, see Pavlov and Scholbach \cite[Notation 2.3.5]{pavlov2019symmetricopSsp}. Since $\tilde{U}$ applies the right Quillen  functor $U$  levelwise, it preserves the trivial fibrations of the positive stable model structure. This also shows that $\tilde{T}$ preserves the cofibrations of the positive stable model structure. 

Let $f$ be an acyclic cofibration in the positive stable model structure on $Sp(\sS_*)$. We already showed that $\tilde{T}(f)$ is a cofibration in the positive stable model structure. Therefore it is sufficient to show that $\tilde{T}(f)$ is a weak equivalence in the positive stable model structure.  Since there are more trivial fibrations in the positive stable model structure than the stable model structure, a cofibration in the positive stable model structure is also a cofibration in the stable model structure. Furthermore, the weak equivalences of the positive stable model structure and the stable model structure agree, see Pavlov-Scholbach  \cite[Proposition 3.3.1]{pavlov2019symmetricopSsp}. In particular, $f$ is also an acyclic cofibration in the stable model structure. Therefore $\tilde{T}(f)$ is an acyclic cofibration in the stable model structure. This shows that $\tilde{T}(f)$  is a weak equivalence in the positive stable model structure as desired. 
 \end{proof}
 
\subsection{Proof of Theorem \ref{thm morita classification}}

To prove Theorem \ref{thm morita classification}, we need to show that $Sp(\cat D)$ is monoidally Quillen equivalent to the model category $R$-modules for some commutative ring spectrum $R$. Here,  $\cat D$ is as in Notation \ref{notation for D}. The definition of the symmetric monoidal model category $(Sp(\cat D), \wedge, \I)$ is given in Notation \ref{notation positive stable model structure}. Our conventions on stable and positive stable model structures are also given in Notation \ref{notation positive stable model structure}.

 The commutative ring spectrum $R$ should be considered as the derived endomorphism ring spectrum of the monoidal unit of $Sp(\cat D)$. However, in order to ensure that the endomorphism ring spectrum is a commutative ring spectrum, one needs a fibrant model of $\I$ as a commutative monoid. For this, we take a fibrant replacement
 \begin{equation}\label{eq fibrant replacement of I}
      \varphi \co \I \we f\I
 \end{equation}
of $\I$ in the model category of commutative monoids in $Sp(\cat D)^+$. Since the weak equivalences of the positive stable model structure agree with those of the stable model structure Pavlov-Scholbach \cite[Proposition 3.3.1]{pavlov2019symmetricopSsp}, $\varphi$ is also a weak equivalence in $Sp(\cat D)$.

Since $Sp(\cat D)$ is flat (Pavlov-Scholbach \cite[Proposition 3.4.2]{pavlov2019symmetricopSsp}), Quillen invariance holds for $Sp(\cat D)$. This means that the Quillen adjunction between the model categories of $\II$-modules and $f\II$-modules induced by $\varphi$ is a Quillen equivalence, Schwede and Shipley \cite[Theorem 4.3]{schwede2000algebrasandmodules}. Similarly, this adjunction is also a Quillen equivalence between $\I\textup{-mod}^+$ and $f\I\textup{-mod}^+$. Since $\II$ denotes the monoidal unit of $Sp(\cat D)$, $\II\textup{-mod}$ is simply another name for $Sp(\cat D)$. Because $f\II$ is a commutative monoid, $f\II\textup{-mod}$ is also a closed symmetric monoidal model category. The left adjoint of the Quillen equivalence between $\II\textup{-mod}$ and $f\II\textup{-mod}$ is a  monoidal Quillen functor, see the discussion after Definition 4.1.14 in Hovey \cite{Hovey}. This functor is given by 
\[f\II \wedge -\co Sp(\cat D) \to f\I\textup{-mod}.\]
 The  monoidality of this functor follows from the natural isomorphism
\[f\II \wedge (X\wedge Y)  \cong (f\II \wedge X) \wedge_{f\II} (f\II \wedge Y)\]
that holds for every $X,Y \in Sp(\cat D)$. Here,  $\wedge_{f\II}$ denotes the monoidal product in $f\II$-modules. 

This shows that $Sp(\cat D)$ is  monoidally Quillen equivalent to the model category of $f\II$-modules. Therefore, it is sufficient to show that the model category of $f\II$-modules is  monoidally Quillen equivalent to modules over a commutative ring spectrum. We obtain the following. 
\begin{proposition}\label{prop fimodules}
Let $f\I$ be the fibrant replacement of $\I$ given in \eqref{eq fibrant replacement of I}. The functor $f\I \wedge - $ is the left adjoint of a monoidal Quillen equivalence between ($Sp(\cat D)^+$) $Sp(\cat D)$ and  ($f\II\textup{-mod}^+$) $f\II\textup{-mod}$.  
\end{proposition}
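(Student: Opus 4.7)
The plan is to assemble the pieces that the preceding paragraph has already put in place, and verify that the adjunction $(f\I \wedge -) \dashv U$ (with $U$ the restriction of scalars along $\varphi$) is indeed a Quillen equivalence in both the stable and the positive stable model structures, and that the left adjoint is strong symmetric monoidal.

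First, I would invoke Quillen invariance in the form of Schwede--Shipley \cite[Theorem 4.3]{schwede2000algebrasandmodules}. Since $Sp(\cat D)$ is flat (Pavlov--Scholbach \cite[Proposition 3.4.2]{pavlov2019symmetricopSsp}) and $\varphi \co \I \we f\I$ is a weak equivalence of commutative (in particular associative) monoids in the stable model structure, this immediately gives a Quillen equivalence between $\I\textup{-mod} = Sp(\cat D)$ and $f\I\textup{-mod}$ whose left adjoint is extension of scalars $f\I \wedge_{\I} - \cong f\I \wedge -$. For the positive stable version, I would observe that the positive stable structure on $f\I\textup{-mod}$ is still defined (it is the lift of the positive stable structure on $Sp(\cat D)$ to $f\I$-modules), that $Sp(\cat D)^+$ is also flat, and that $\varphi$ remains a weak equivalence in the positive stable model structure because the positive and the ordinary stable model structures share the same weak equivalences (Pavlov--Scholbach \cite[Proposition 3.3.1]{pavlov2019symmetricopSsp}). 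Applying Quillen invariance again produces the positive stable analogue of the Quillen equivalence.

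Second, I would establish that $f\I \wedge -$ is strong symmetric monoidal. The unit map $\I_{f\I\textup{-mod}} = f\I \cong f\I \wedge \I$ is the obvious isomorphism, and the monoidal structure constraint is the natural isomorphism
\[
f\I \wedge (X \wedge Y) \;\cong\; (f\I \wedge X) \wedge_{f\I} (f\I \wedge Y),
\]
which is a standard consequence of the fact that $f\I$ is a commutative monoid and $\wedge_{f\I}$ is a coequaliser constructed from $\wedge$. Checking associativity, unitality and symmetry of this constraint is a diagram chase that only uses the coherence of $\wedge$ and the commutative monoid structure on $f\I$; I would regard this as routine. Combined with the Quillen equivalence, this verifies that $(f\I \wedge -) \dashv U$ is a strong symmetric monoidal Quillen equivalence in the sense of Definition \ref{def monoidal quillen equivalence} (both $\I$ and $f\I$ are cofibrant as commutative monoids in an admissible category, so the cofibrant-unit hypothesis of that definition is satisfied).

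The only mild obstacle is making sure that Quillen invariance applies to the positive stable model structure as stated: one must check that Schwede--Shipley's hypothesis (flatness of the ambient monoidal model category) is still available, which is why I would cite \cite[Proposition 3.4.2]{pavlov2019symmetricopSsp} explicitly for both $Sp(\cat D)$ and $Sp(\cat D)^+$. Once that is in hand, the remaining content is purely formal.
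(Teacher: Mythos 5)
Your proof takes essentially the same approach as the paper's: both invoke flatness of $Sp(\cat D)$ from Pavlov--Scholbach, apply Schwede--Shipley's Quillen invariance theorem for the Quillen equivalence (stable and positive), and exhibit strong monoidality via the natural isomorphism $f\I \wedge (X\wedge Y) \cong (f\I \wedge X) \wedge_{f\I} (f\I \wedge Y)$. One small slip: your parenthetical justification that the units are cofibrant because ``$\I$ and $f\I$ are cofibrant as commutative monoids'' conflates cofibrancy in the category of commutative monoids with cofibrancy of the unit object in the underlying model category (and $f\I$, being a \emph{fibrant} replacement, is in fact not expected to be cofibrant as a commutative monoid); the correct reason for the stable case is simply that $\I$ is cofibrant in $Sp(\cat D)$ since $\I_{\cat D}$ is cofibrant and $F_0$ is left Quillen, and then $f\I = (f\I \wedge -)(\I)$ is cofibrant in $f\I\textup{-mod}$ because $f\I \wedge -$ is left Quillen. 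Note also that in the positive structures the units need not be cofibrant (as the paper later remarks), so the cofibrant-unit hypothesis of Definition \ref{def monoidal quillen equivalence} is only guaranteed in the non-positive case.
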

\begin{construction}\label{construction L and G}
In Construction \ref{construction spectral enrichment in D}, we show that  there is a monoidal left Quillen functor $\tilde{T} \co Sp(\sS_*) \to Sp(\cat D).$
Composing this with the  monoidal left Quillen functor $f\II \wedge - $, we obtain a  monoidal left Quillen functor
\[F := (f \I \wedge -) \circ \tilde{T} \co Sp(\sS_*) \to f\II\text{-mod}.\]
 Let $R$ denote the right adjoint of $F$. 

Since $F$ is monoidal, $R$ is lax monoidal and $R(f\I)$ is a commutative ring spectrum. Again because it is monoidal, $F$ induces a functor
\[F' \co R(f\I)\textup{-mod} \to FR(f\I)\textup{-mod}.\]
Here, $F'$ is given by $F$ and the right adjoint $R'$ of $F'$ is again given by  $R$. Since $F \dashv R$ is a strong monoidal Quillen pair,  the counit map of this adjunction  provides a map $FR(f\II) \to f\II$ of commutative monoids. Through this map, we obtain a left Quillen functor
\[f\II \wedge_{FG(f\II)} - \co  FR(f\I)\textup{-mod} \to  f\II\textup{-mod}.\]

Finally, let $L= (f\II \wedge_{FG(f\II)} -) \circ F'$. We obtain a  Quillen pair $L \dashv G$  as shown below.
\begin{equation*}
\begin{tikzcd}
    L   \co R(f\I)\textup{-mod} \ar[r,shift left]& \ar[l,shift left] f\I\textup{-mod} \co G
\end{tikzcd}
\end{equation*}
Going through the definition of these functors above, one observes that $G$ is given by $R$. 
\end{construction}
To prove Theorem \ref{thm morita classification}, we need to show that $L \dashv G$ is a monoidal Quillen equivalence.
\begin{remark}
In the construction above, although we do not make it explicit, we make use of the spectral enrichment of  $f\I\textup{-mod}$  as in the proof of   Schwede-Shipley \cite[Theorem 3.9.3]{schwede2003stablearemodules}. The mapping spectrum $\hom(-,-)$ in the category of $f\I$-modules is given by $R(\Hom(-,-))$ where $\Hom(-,-)$ denotes the internal hom in $f\I\textup{-mod}$.  The functor $G$ above is given by 
\[G(-) = R(-) = R(\Hom(f\I,-)) = \hom(f\I,-).\]
In particular, one observes that the adjoint pair $L \dashv G$ above agrees with the one constructed in the proof of Schwede-Shipley \cite[Theorem 3.9.3]{schwede2003stablearemodules}.
\end{remark}

\begin{proposition}
The Quillen pair $L \dashv G$ in Construction \ref{construction L and G} is also a Quillen pair between $R(f\I)\textup{-mod}^+$ and $f\I\textup{-mod}^+$.
\end{proposition}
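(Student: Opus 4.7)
The plan is to verify that $G$ is a right Quillen functor by showing it preserves fibrations and trivial fibrations in the positive stable model structures, exploiting the fact that in both $R(f\I)\textup{-mod}^+$ and $f\I\textup{-mod}^+$ these classes are detected on the underlying spectrum categories.

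First, I would unpack the right adjoint $G$. By Construction \ref{construction L and G}, the functor $G$ is given on the underlying symmetric sequences by $R = \tilde{U}\circ(\text{forget})$, where $\text{forget}\co f\I\textup{-mod}\to Sp(\cat D)$ is the forgetful functor and $\tilde{U}\co Sp(\cat D)\to Sp(\sS_*)$ is the right adjoint of $\tilde{T}$ from Construction \ref{construction spectral enrichment in D}; the result is then regarded as an $R(f\I)$-module via the commutative monoid structure on $R(f\I)=\tilde{U}(f\I)$.

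Next, I would recall that the positive stable model structure on modules is \emph{transferred} from the ambient monoidal model category: this is exactly the content of admissibility, which holds for $Sp(\cat D)^+$ and for $Sp(\sS_*)^+$ by Theorem~\ref{thm sp is admissble for every nice} applied to the nice symmetric monoidal model categories $\cat D$ and $\sS_*$. Consequently, both forgetful functors $f\I\textup{-mod}^+\to Sp(\cat D)^+$ and $R(f\I)\textup{-mod}^+\to Sp(\sS_*)^+$ create fibrations and weak equivalences. In particular, to check that $G$ preserves fibrations (resp.\ trivial fibrations), it suffices to check that the composite underlying map in $Sp(\sS_*)^+$ is a fibration (resp.\ trivial fibration).

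This composite is $\tilde{U}\circ(\text{forget})$, and both factors preserve the required classes: the forgetful functor $f\I\textup{-mod}^+\to Sp(\cat D)^+$ does so tautologically (fibrations and weak equivalences are created there), and $\tilde{U}\co Sp(\cat D)^+\to Sp(\sS_*)^+$ is a right Quillen functor by Proposition~\ref{prop tilde T between positive}. Composing these observations shows that $G$ takes fibrations to fibrations and trivial fibrations to trivial fibrations between the positive stable model categories, which together with the existing adjunction $L\dashv G$ gives the desired Quillen pair. The only mild subtlety is confirming that cofibrations between the two positive stable model structures on modules agree with those in the stable ones (they do, since left Bousfield localization does not alter cofibrations), so that the already-constructed adjunction on underlying categories is the one transported to the positive setting; this is immediate from the construction.
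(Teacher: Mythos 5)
Your proof is correct and follows essentially the same approach as the paper's: the paper reduces to showing that $F=(f\I\wedge-)\circ\tilde{T}$ is left Quillen between the positive stable structures by citing Propositions \ref{prop tilde T between positive} and \ref{prop fimodules}, whereas you work on the right adjoint side, writing $G$ (at the level of underlying objects) as $\tilde{U}\circ(\text{forget})$ and observing that each factor preserves positive (trivial) fibrations; these are dual phrasings of the same argument, both resting on the transfer of the positive stable model structure to module categories and on Proposition \ref{prop tilde T between positive}. One small inaccuracy in your closing remark: the cofibrations of the positive stable model structure do \emph{not} coincide with those of the stable one (the positive structure has fewer cofibrations, being localized from the positive projective rather than the projective structure), but this is tangential — the adjunction of underlying categories is unchanged regardless, and your main argument goes through unaffected.
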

\begin{proof}
It is sufficient to show that $G$ preserves positive (trivial) fibrations. In the model category of modules over a given monoid in a nice monoidal model category, (trivial) fibrations are those of the underlying model category. Since the $G$ is given by $R$, it is sufficient to show that $R$ preserves positive (trivial) fibrations.  For this, it is sufficient to show that $F$ is a left Quillen functor between $Sp(\sS_*)^+$ and $f\I\textup{-mod}^+$.  This follows by Propositions \ref{prop tilde T between positive} and \ref{prop fimodules}.
\end{proof}

\begin{proposition}\label{prop L G is strong monoidal}
The left Quillen functor $L$ is monoidal. In other words, $L \dashv G$ is a strong monoidal Quillen pair.
\end{proposition}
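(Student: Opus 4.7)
The plan is to exhibit $L$ as a composite of two left adjoints each of which is strong symmetric monoidal, and then to read off the monoidal structure on the composite. Recall from Construction \ref{construction L and G} that
\[
L \;=\; \bigl(f\II \wedge_{FR(f\II)} - \bigr) \circ F',
\]
where $F' \co R(f\I)\text{-mod} \to FR(f\I)\text{-mod}$ is induced by $F$, and the second factor is extension of scalars along the counit $\varepsilon \co FR(f\II) \to f\II$ of the adjunction $F \dashv R$. Note that $\varepsilon$ is a morphism of commutative monoids because $F$ is strong monoidal and $R$ is lax monoidal, so both factors are of the correct form.

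For the first factor, I would use that $F = (f\II \wedge -) \circ \tilde T$ is itself strong symmetric monoidal, being the composite of the strong monoidal $\tilde T$ (Construction \ref{construction spectral enrichment in D}) and the strong monoidal $f\II \wedge -$ (Proposition \ref{prop fimodules}). The general fact I would invoke is: if $\Phi \co \cat A \to \cat B$ is a strong symmetric monoidal left adjoint between closed symmetric monoidal categories and $A$ is a commutative monoid in $\cat A$, then $\Phi$ descends to a strong symmetric monoidal functor $A\text{-mod} \to \Phi(A)\text{-mod}$. The required natural isomorphism
\[
\Phi(M \wedge_A N) \;\cong\; \Phi(M) \wedge_{\Phi(A)} \Phi(N)
\]
is built from the monoidal structure on $\Phi$ together with the fact that $\Phi$, being a left adjoint, preserves the reflexive coequalizers that define $\wedge_A$; the unit coherence comes from $\Phi$ sending $A$ to $\Phi(A)$. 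Applied with $\Phi = F$ and $A = R(f\II)$, this renders $F'$ strong monoidal.

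For the second factor, I would appeal to the standard fact that for a morphism $\phi \co A \to B$ of commutative monoids in a closed symmetric monoidal category, extension of scalars $B \wedge_A - \co A\text{-mod} \to B\text{-mod}$ is strong symmetric monoidal via the canonical isomorphism
\[
B \wedge_A (M \wedge_A N) \;\cong\; (B \wedge_A M) \wedge_B (B \wedge_A N),
\]
together with the unit identification $B \wedge_A A \cong B$. Applying this to $\varepsilon \co FR(f\II) \to f\II$ provides the second strong monoidal structure.

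The only real work is bookkeeping: one must check that the composite monoidal structure on $L$ is symmetric, associative and unital, and that on the monoidal unit $R(f\II)$ of $R(f\I)\text{-mod}$ the combined structure map is the canonical isomorphism
\[
L(R(f\II)) \;=\; f\II \wedge_{FR(f\II)} FR(f\II) \;\cong\; f\II,
\]
which is the unit of $f\II\text{-mod}$. Since these coherences for $F'$ and for base change are well known (and since $F \dashv R$ is already a strong monoidal Quillen pair, so $F$ carries all the required coherence data), combining them yields that $L \dashv G$ is a strong monoidal Quillen pair in the sense of Definition \ref{def monoidal quillen equivalence}.
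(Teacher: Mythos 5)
Your proof is correct and follows essentially the same route as the paper: both decompose $L$ as extension of scalars along the counit $FR(f\II)\to f\II$ composed with the induced functor $F'$, observe that $F$ is strong monoidal as a composite of $\tilde T$ and $f\II\wedge -$, and conclude that each factor (and hence $L$) is strong monoidal because $F$ preserves the (reflexive) coequalizers defining relative smash products. You spell out the coherence bookkeeping a bit more explicitly than the paper, but the underlying argument is the same.
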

\begin{proof}
Since $F$ is a composite of  monoidal functors, it is a  monoidal functor. Since $F$ preserves coequalizers and monoidal products, one observes that it also carries monoidal products of $R(f\I)$-modules to monoidal products of $FR(f\I)$-modules. In other words, $F'$ is also  monoidal. Finally $L$ is monoidal as it is given by a composite of monoidal functors.
\end{proof}

The proof of Schwede-Shipley \cite[Theorem 3.9.3]{schwede2003stablearemodules} goes through to show that $L \dashv G$ is a Quillen equivalence between the respective stable model structures. The only difference in our situation and that of Schwede-Shipley \cite[Theorem 3.9.3]{schwede2003stablearemodules} is that our compact generator $f\I$ may not be  bifibrant in $f\I\textup{-mod}$. It is fibrant in $f\I\textup{-mod}^+$ but it may not be fibrant in $f\I\textup{-mod}$. Similarly, $f\I$ is cofibrant in $f\I\textup{-mod}$ but it may not be cofibrant in $f\I\textup{-mod}^+$. 
\begin{theorem}\label{thm schwede shipley morita} (Schwede-Shipley \cite[Theorem 3.9.3]{schwede2003stablearemodules})
As in Equation \eqref{eq fibrant replacement of I}, let $\I \we f\I$ be a fibrant replacement of the monoidal unit $\I$ of $Sp(\cat D)$ in the model category of commutative monoids in $Sp(\cat D)^+$. The Quillen pair 
\[L \dashv G\]
given in Construction \ref{construction L and G} is a Quillen equivalence between  $R(f\I)\textup{-mod}$ and $f\I\textup{-mod}$.
\end{theorem}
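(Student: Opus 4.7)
The plan is to adapt the proof of Schwede–Shipley \cite[Theorem 3.9.3]{schwede2003stablearemodules} to handle the bifibrancy gap for $f\I$: in $f\I\text{-mod}$ it is cofibrant but possibly not fibrant, while in $f\I\text{-mod}^+$ it is fibrant but possibly not cofibrant. By Hovey \cite[1.3.16]{Hovey}, it suffices to verify that (i) $G$ reflects weak equivalences between fibrant objects, and (ii) for every cofibrant $X$ in $R(f\I)\text{-mod}$, the derived unit $X \to G((LX)^{\mathrm{fib}})$ is a weak equivalence. The key enabler is that the weak equivalences of $f\I\text{-mod}$ and $f\I\text{-mod}^+$ coincide by Pavlov–Scholbach \cite[Proposition 3.3.1]{pavlov2019symmetricopSsp}, so we can freely mix cofibrancy from one structure with fibrancy from the other.

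For (i), Propositions \ref{prop spectra in a stable model category} and \ref{prop fimodules} show that $f\I$ is a compact generator of $\ho{f\I\text{-mod}}$. As noted in the remark preceding the theorem, $G$ is the mapping spectrum functor $\hom(f\I,-)$. Since $f\I$ is cofibrant in $f\I\text{-mod}$ (it is the image of the cofibrant monoidal unit of $Sp(\cat D)$ under $f\I \wedge -$), for any fibrant $M$ the spectrum $\hom(f\I, M)$ computes the right derived mapping spectrum, and its homotopy groups recover $[\Sigma^* f\I, M]_{\ho{f\I\text{-mod}}}$. Compact generation then forces $G$ to reflect weak equivalences between fibrant objects.

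For (ii), we proceed by transfinite cell induction on $X$. The class of cofibrant $X$ for which the derived unit is a weak equivalence is closed under coproducts, sequential colimits along cofibrations, and cobase changes along generating cofibrations; the closure uses that $L$ is a strong monoidal left Quillen functor (Proposition \ref{prop L G is strong monoidal}) and that $G$ commutes with the relevant homotopy colimits since $f\I$ is compact. It therefore suffices to handle $X = R(f\I)$ (and its suspensions), where $L(R(f\I)) \cong f\I$ and the derived unit becomes the canonical map $R(f\I) \to \hom(f\I, (f\I)^{\mathrm{fib}})$. The main obstacle is to see that this is a weak equivalence; equivalently, that $\hom(f\I,(f\I)^{\mathrm{fib}})$ is a model of the derived endomorphism spectrum of $f\I$, which by construction equals $R(f\I)$. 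Since the class of weak equivalences is common to the two model structures, this derived endomorphism spectrum can be computed equally well in $f\I\text{-mod}^+$, where $f\I$ is already fibrant, and the identification $R(f\I) \simeq \hom(f\I,(f\I)^{\mathrm{fib}})$ then follows from the construction of $R$ as the right adjoint of $F = (f\I \wedge -) \circ \tilde T$.
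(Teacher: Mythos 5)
Your proof is correct in its essentials, but it takes a genuinely different route from the paper's. The paper proceeds entirely at the level of homotopy categories: it computes the derived unit at $R(f\I)$ and the derived counit at $f\I$ (using that $L$ is monoidal and that the positive and non-positive stable model structures share weak equivalences to dispose of the fibrant replacement), and then invokes the triangulated-category argument from Schwede--Shipley: both derived functors are exact, preserve coproducts (the right adjoint because $f\I$ is compact), so the unit and counit are isomorphisms on the localizing subcategories generated by the units, which are everything since the units are compact generators. You instead apply Hovey's criterion \cite[1.3.16]{Hovey} -- $G$ reflects weak equivalences between fibrant objects, and the derived unit is an equivalence on cofibrant objects -- and then run a cell induction. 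Both approaches rest on the same three facts: $f\I$ is a compact generator (Propositions \ref{prop spectra in a stable model category} and \ref{prop fimodules}), weak equivalences agree between the positive and non-positive structures (so that $f\I$, fibrant only positively and cofibrant only non-positively, still computes the derived endomorphism spectrum $R(f\I)$), and $L$ is strong monoidal so $L(R(f\I)) \cong f\I$. The paper's phrasing is slightly cleaner because the triangulated structure does the bookkeeping once and for all.

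One detail in your cell induction deserves care. You assert that the class of cofibrant $X$ for which the derived unit is an equivalence is ``closed under cobase changes along generating cofibrations'' and conclude that it suffices to treat $X = R(f\I)$ and its suspensions. But the (co)domains of the generating cofibrations of $R(f\I)\textup{-mod}$ are of the form $R(f\I)\wedge F_n K_+$ for finite simplicial sets $K$; these are finite cell objects built from shifts of $R(f\I)$, not shifts themselves. So before the pushout step of the induction can run, you need the class to be closed under cofiber sequences (and finite coproducts) and to apply this first to put the relevant $R(f\I)\wedge F_n K_+$ into the class. Since the categories are stable, $G$ (a right Quillen functor) preserves homotopy pullback squares, hence homotopy pushout squares, and $L$ preserves homotopy pushouts as a left Quillen functor; so cofiber-sequence closure does hold -- but it should be stated, because without it the base case of your induction is incomplete. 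Once that is inserted, the remaining closure properties (coproducts and sequential colimits via compactness of $f\I$, retracts because weak equivalences are closed under retracts) are as you say, and the base case reduces to the identification $R(f\I) \simeq \hom(f\I, f\I)$, which is indeed the derived endomorphism spectrum for the reasons you give.
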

\begin{proof}
We first show that the derived unit map
\[R(f\I) \to Gf'Lc'(R(f\I))\]
 of the adjunction $L \dashv G$ (evaluated at $R(f\I)$) is a weak equivalence. Here, $f'$ denotes a fibrant replacement functor in $f\I\textup{-mod}$ and $c'$ denotes a cofibrant replacement functor in $R(f\I)\textup{-mod}$. As $R(f\I)$ is cofibrant in $R(f\I)\textup{-mod}$, the cofibrant replacement functor may be omitted. Also, $L(R(f\I)) = f\I$ as $L$ is monoidal. The monoidal unit $f\I$ is only assumed to be  fibrant in the positive stable model structure and therefore it may not be fibrant in the non-positive stable model structure. However, $G$ is also a right Quillen functor between the respective positive stable model structures and the weak equivalences of the positive and the non-positive stable model structures agree Pavlov-Scholbach  \cite[Proposition 3.3.1]{pavlov2019symmetricopSsp}. Due to this, the fibrant replacement $f'$ above  may also be omitted. Since $G$ is  given by $R$ and since $L$ is monoidal, one obtains that \[GL(R(f\I)) = G(f\I) = R(f\I)\] 
 as desired. Similarly, one shows that the derived counit map evaluated at $f\I$
 \[Lc'Gf'(f\I) \to f\I\]
 is also a weak equivalence where $f'$ and $c'$ denote the relevant fibrant and cofibrant replacement functors respectively. 
 
 The  rest of the proof follows as in Schwede-Shipley \cite[Theorem 3.9.3]{schwede2003stablearemodules}. For the sake of completeness, we provide a sketch of the proof of Schwede-Shipley  \cite[Theorem 3.9.3]{schwede2003stablearemodules} in our case, i.e. in the case of a single compact generator. For this, it is sufficient to show that the induced adjoint pair at the level of homotopy categories is indeed an equivalence of categories. The unit and the counit maps of the derived adjunction are isomorphisms when evaluated at the respective monoidal units as we show above. 
 
It follows from Proposition \ref{prop spectra in a stable model category} and our constructions that  $f\II$ is a compact generator of the homotopy category of $f\II\textup{-mod}$.

The derived functor of $L$ is a left adjoint and therefore it preserves coproducts. Using the compactness of $f\II$, one can show as in the proof of Schwede-Shipley  \cite[Theorem 3.9.3]{schwede2003stablearemodules} that the derived functor of $G$ also preserves coproducts. Furthermore, both derived functors preserve shifts and triangles. Therefore, the unit and the counit maps of the derived adjunction are isomorphisms on the categories generated by the monoidal units under coproducts, triangles and shifts. Since both monoidal units are generators, this shows that the counit and the unit maps of the derived adjunction are isomorphisms on all objects. 
\end{proof}

The following provides the uniqueness part of Theorem \ref{thm morita classification}. This is a consequence of Lurie's results. 
\begin{proposition}\label{prop unqiueness part of theorem morita}
Let $\cat M$ be a symmetric monoidal model category as in Theorem \ref{thm morita classification} with a cofibrant monoidal unit. Furthermore, assume that  $\cat M$ is (strong or weak) symmetric monoidally Quillen equivalent to $R_i$-modules via a zig-zag of symmetric monoidal Quillen equivalences where each symmetric monoidal model category involved in the zig-zag has a cofibrant monoidal unit. Here, $R_i$ denotes  a  commutative ring spectrum for $i=1$ and $i=2$. In this situation, $R_1$ and $R_2$ are weakly equivalent as commutative ring spectra.
\end{proposition}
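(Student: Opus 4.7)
The plan is to deduce this from Lurie's classification of stable presentable symmetric monoidal $\infty$-categories in \cite[Proposition 7.1.2.7]{lurie2012higher}, using the fact that the zig-zag of monoidal Quillen equivalences descends to an equivalence of underlying symmetric monoidal $\infty$-categories.

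First, I would argue that each (strong or weak) symmetric monoidal Quillen equivalence between symmetric monoidal model categories with cofibrant monoidal units induces an equivalence on underlying symmetric monoidal $\infty$-categories. For strong monoidal Quillen equivalences with cofibrant units this is standard: the derived left adjoint is strong monoidal on the homotopy categories and upgrades to a symmetric monoidal equivalence of $\infty$-categories (compare Nikolaus--Sagave \cite{nikolaus2017presentably}). The weak monoidal case is handled by Schwede--Shipley \cite[Theorem 3.12]{Schwede2003monoidalquillen}, which guarantees that the derived monoidal structure maps are equivalences under the cofibrancy hypothesis on units that is built into the statement. Composing the zig-zag, one obtains a symmetric monoidal equivalence of presentable $\infty$-categories between the underlying $\infty$-categories of $R_1\textup{-mod}$ and $R_2\textup{-mod}$.

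Next, I would invoke Lurie's theorem: any stable presentable symmetric monoidal $\infty$-category $\mathcal{C}$ whose unit $\mathbf{1}$ is compact is canonically equivalent, as a symmetric monoidal $\infty$-category, to $\mathrm{Mod}_A$, where $A = \mathrm{End}_{\mathcal{C}}(\mathbf{1})$ is the endomorphism $E_\infty$-ring of the unit, and $A$ is determined up to equivalence of $E_\infty$-rings. Applied to the underlying $\infty$-categories of $R_i\textup{-mod}$, one checks that the compact generator hypothesis is inherited through the zig-zag (since $\cat M$ has this property by assumption and each Quillen equivalence preserves compactness and generation on homotopy categories), and that the endomorphism $E_\infty$-ring of the unit in $R_i\textup{-mod}$ is $R_i$ itself. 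Therefore $R_1$ and $R_2$ are equivalent as $E_\infty$-rings, which is precisely a weak equivalence as commutative ring spectra.

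The main obstacle is the passage from model-categorical data to symmetric monoidal $\infty$-categorical data, particularly for weak monoidal Quillen equivalences where the left adjoint is not itself strong monoidal. The cofibrancy assumption on the units in every step of the zig-zag, which is hypothesized in the statement, is exactly what ensures that the monoidal structure maps of the derived functors are weak equivalences and hence produce an equivalence of symmetric monoidal $\infty$-categories; without this the argument breaks down. Once this translation is in place, the conclusion is a direct application of Lurie's uniqueness result.
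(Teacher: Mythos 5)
Your proposal follows the same strategy as the paper: pass to underlying symmetric monoidal $\infty$-categories and invoke Lurie's uniqueness result \cite[Proposition 7.1.2.7]{lurie2012higher}. The difference lies in how the first step --- that a zig-zag of (strong or weak) symmetric monoidal Quillen equivalences with cofibrant units descends to an equivalence of symmetric monoidal $\infty$-categories --- is justified. You identify this as ``the main obstacle'' and gesture at Nikolaus--Sagave and Schwede--Shipley \cite[Theorem 3.12]{Schwede2003monoidalquillen}, but neither reference actually establishes the passage you need: Schwede--Shipley's Theorem 3.12 produces Quillen equivalences on categories of monoids and modules, not a symmetric monoidal equivalence of underlying $\infty$-categories, and the Nikolaus--Sagave result runs in the opposite direction (from $\infty$-categories to model categories). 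The paper closes this gap cleanly by citing P\'eroux \cite[Theorem 2.13]{peroux2020coalgebras}, which says precisely that a (weak or strong) monoidal Quillen equivalence between monoidal model categories with cofibrant units induces a symmetric monoidal equivalence of underlying $\infty$-categories. With that citation in hand your argument is complete and identical to the paper's; without it, the step you yourself flag as the main obstacle remains unproven.
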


\begin{proof}
The hypothesis implies that the symmetric monoidal $\infty$-categories corresponding to the model categories of  $R_1$-modules and $R_2$-modules are equivalent, see P\'eroux \cite[Theorem 2.13]{peroux2020coalgebras}. It follows from Proposition 7.1.2.7 of Lurie \cite{lurie2012higher} that $R_1$ and $R_2$ are weakly equivalent as commutative ring spectra.
\end{proof}

We are ready to prove Theorem \ref{thm morita classification}.

\begin{proof}[Proof of Theorem \ref{thm morita classification}]
Let \cat N be a  stable, combinatorial and simplicial monoidal model category whose monoidal unit is a compact generator, i.e.\ let \cat N satisfy the hypothesis of Theorem \ref{thm morita classification}. Due to Proposition \ref{prop stable spectral  replacement}, \cat N is monoidally Quillen equivalent to $Sp(\cat D)$ for some monoidal model category $\cat D$ as in Notation \ref{notation for D}. Let $\I \we f\I$ be a fibrant replacement of the monoidal unit $\I$ of $Sp(\cat D)$ in the model category of commutative monoids in $Sp(\cat D)^+$. Due to Proposition \ref{prop fimodules}, $Sp(\cat D)$ is monoidally Quillen equivalent to $f\I\textup{-mod}$. 
Construction \ref{construction L and G} provides a Quillen adjunction $L \dashv G$ between the model categories of $f\I$-modules and $R(f\I)$-modules where $R(f\I)$ is a commutative ring spectrum. This Quillen adjunction is monoidal due to Proposition \ref{prop L G is strong monoidal} and it is indeed a Quillen equivalence due to Theorem \ref{thm schwede shipley morita}. This proves that the given \cat N is monoidally Quillen equivalent to the model category of modules over $R(f\I)$ (with the stable model structure) as desired.

The uniqueness part of Theorem \ref{thm morita classification} follows by Proposition \ref{prop unqiueness part of theorem morita}. 
\end{proof}

\end{document}